\documentclass[a4paper]{amsart}
\usepackage{amsfonts,amsmath,amssymb,amsthm,cmtiup}

\usepackage{mathrsfs}
\usepackage[noadjust]{cite}

\numberwithin{equation}{section}

\newtheorem{theorem}{Theorem}[section]

\newtheorem{corollary}[theorem]{Corollary}

\newtheorem{statement}[theorem]{Statement}

\theoremstyle{definition}
\newtheorem{remark}[theorem]{Remark}
\newtheorem{definition}[theorem]{Definition}
\newtheorem{construction}[theorem]{Construction}

\newtheorem{question}{Question}

\newcommand{\Ind}{\operatorname{Ind}}
\newcommand{\ind}{\operatorname{ind}}

\newcommand{\Int}{\operatorname{Int}}
\newcommand{\diam}{\operatorname{diam}}

\begin{document}

\author{I.~M.~Leibo}
\address{Moscow Center for Continuous Mathematical Education, Moscow, Russia}
\email{imleibo@mail.ru}

\title{Coincidence of Dimensions for Stratifiable Spaces}

\begin{abstract}
It is proved that $\Ind X = \dim X$  for any stratifiable space $X$.
This implies the coincidence of the dimensions $\ind$, $\Ind$, and $\dim$ for stratifiable spaces
with countable network. It is also proved that each stratifiable space is an S-space (has an S-network).
\end{abstract}

\keywords{dimension, network, $\sigma$-space, S-network, stratifiable space, $\sigma$-closure-preserving system of sets}

\subjclass[2020]{54F45}

\maketitle

\section{Introduction}
\label{sec1}

Kat\v etov and Morita \cite{1} proved the equality $\dim X=\Ind X$ for any metric space $X$.
Yu.~M.~Smirnov and J.~Nagata proved that a space $X$ is metrizable if and only if there exists
a $\sigma$-locally finite base for the topology of $X$ (see \cite{2}). Each locally finite system of
sets in a space $X$ is closure-preserving in $X$ \cite{1}. Since the dimensions $\dim$ and
$\Ind$ coincide for any metrizable space, there arises the natural question of whether they coincide for
spaces with a $\sigma$-closure-preserving base. In this paper, we give a positive answer to  this
question.

In \cite{3}, Ceder introduced the notion of a space with a $\sigma$-closure-preserving base (an $M_1$-space); in the
same paper, he also introduced  spaces  with a $\sigma$-closure-preserving quasi-base ($M_2$-spaces) and $M_3$-spaces
(which Borges called stratifiable spaces \cite{4}).
It follows from the definitions of $M_1$-, $M_2$-, and $M_3$-spaces that every $M_1$-space is
an $M_2$-space and every $M_2$-space is stratifiable \cite{5}. A space with a $\sigma$-closure-preserving
base is not necessarily metrizable~\cite{5}.

Stratifiable spaces occupy a special place in the theory of generalized metric spaces.
They have many important and interesting properties. For example, countable products of
stratifiable spaces are stratifiable, closed images of stratifiable spaces are stratifiable, and
for stratifiable spaces the Dugundji--Borges extension theorem holds \cite[Theorem~4.3]{4}.
We will state other properties of stratifiable spaces as needed. A detailed
 description of the properties of stratifiable spaces can be found in the monograph~\cite{5}.

Clearly, the coincidence of $\dim$ and $\Ind$ for stratifiable spaces implies that
$\dim X= \Ind X$ for any space $X$ with a $\sigma$-closure-preserving base. This suggests the question
  of whether $\dim$ equals $\Ind$ for stratifiable spaces; it was asked in \cite{6, 7}.
Gruenhage and Junnila showed in \cite{5} that every stratifiable space has
a $\sigma$-closure-preserving quasi-base $\mathscr{B}$
(that is, is an $M_2$-space);  in a normal space, this quasi-base $\mathscr{B}$ can always be assumed
to be closed (consisting of closed sets). The question of whether the classes of $M_1$-spaces and of $M_2$-spaces
coincide remains open.

In this paper, we give a positive answer to the question of whether the dimensions $\dim$ and $\Ind$ coincide
for stratifiable spaces.

In \cite{2},  A.~V.~Arkhangel'skii asked what conditions must be added to the presence of a countable network
in a space $X$ for the equalities $\ind X=\dim X=\Ind X$ to hold. In \cite{8, 9}, the author
proved that $\dim X=\Ind X$ for any first-countable paracompact $\sigma$-space $X$.
This implies $\ind X=\dim X=\Ind X$ for any first-countable space with a countable network.
The equality of $\dim$ and $\Ind $ for stratifiable spaces implies $\ind X=\dim X=\Ind X$ for any
stratifiable space with a countable network.

According to Theorem~1 of \cite{10}, the classes of spaces with a $\sigma$-closure-preserving network,
with a $\sigma$-locally finite network, and with a $\sigma$-discrete network coincide. In this paper,
by a \emph{$\sigma$-space}  we mean a space with a closed $\sigma$-discrete network
(i.e., a $\sigma$-network consisting of closed sets), which was introduced and studied by A.~V.~Arkhangel'skii
in~\cite{2, 11}.

We use the following notation.
Given a metric space $M$ with metric $\rho$ and an $m \in M$, we set
\begin{gather*}
U_r(m)=\{x: x\in M, \rho(x,m)<r\},\\
B_r(m) = \overline{U_r(m)} = \{x: x\in M,\ \rho(x,m)\leq r\}.
\end{gather*}
For two systems of sets  $\nu=\{A_{\alpha}\in\Re\}$ and $\sigma = \{B_{\beta}\in{\L}\}$,
$$
\nu \wedge \varsigma = \{A_{\alpha} \cap B_{\beta}: A_{\alpha}\in\nu,\ B_{\beta}\in\varsigma\};
$$
by $\nu\vee\varsigma$  we denote the union of these systems, i.e.,
$$
\nu\vee\varsigma = \{A_\alpha: \alpha\in\Re\}\cup \{ B_\beta: \beta\in{\L}\}.
$$
For $A \subseteq M$ and $m\in M$, we set
$$
\rho(m,A) = \inf\{\rho(m,x), x\in A\}.
$$
We note at once that the function $f(m) = \rho(m,A)$, $m\in M$,  is continuous on $M$
for any fixed $A\subseteq M$
(see \cite[Chap.\,2, Problem~268]{12}).

We denote the interior of a set $B$ in a space $X$ by $\Int(B)$ and its closure by~$\overline{B}$.

In this paper, unless otherwise stated, all spaces under consideration are assumed to be normal $T_1$-spaces
of finite dimension $\Ind$ and all maps, continuous. We
use the terminology of \cite{1, 7, 13, 14}.

It is my pleasure to thank Professor A.~V.~Arkhangel'skii for attention and Professor O.~V.~Sipacheva
for useful discussions.

\section{Basic Definitions, Statements, and Constructions}
\label{sec2}

The definition and main properties of the dimensions $\ind$, $\Ind$, and $\dim$ can be found in~\cite{1}.

A system of sets  $\{\Upsilon_\alpha: \alpha\in A\}$ is said to be \emph{closure-preserving}
if, for any subset $A_1\subseteq A$,
we have $\overline{\bigcup\{\Upsilon_\alpha: \alpha\in A_1\}} = \bigcup\{\overline{\Upsilon_\alpha}:
\alpha\in{A_1}\}$. A system  of sets is \emph{$\sigma$-closure-preserving} if it is a countable union
of closure-preserving systems. If a $\sigma$-closure-preserving system of sets  in a space $X$ consists
of open sets and forms a base of $X$, then we say that $X$ is a \emph{space with a $\sigma$-closure-preserving
base}. Recall that a system $\mathscr{B}$ of not necessarily open sets in a space $X$ is called
a \emph{quasi-base} of $X$ if, for every point $x \in X$ and every  open
neighborhood $Ox$ of $X$, there exists a $B \in \mathscr {B}$ such that $x \in \Int(B)\subseteq B\subseteq Ox$.
If a system of sets is closure-preserving, then so is the system of closures of these sets.
In what follows, unless otherwise stated, we will consider closed closure-preserving systems of sets,
i.e., systems consisting of closed sets.

A continuous map $\varphi\colon X\to Y$ of a  space $X$ onto a space $Y$ is
called a \emph{condensation} if it is one-to-one (the inverse map is not necessarily continuous).

The standard definition of a stratifiable space can be found in \cite[Definition~1.1]{4}.
According to a theorem of Gruenhage and Junnila, a space is stratifiable if and only if it has
a $\sigma$-closure-preserving quasi-base. Therefore, in this paper, by a \emph{stratifiable space}
we mean a space with a $\sigma$-closure-preserving quasi-base.

Any stratifiable space is perfectly normal and paracompact, any subspace of a stratifiable
space is stratifiable space, and, as shown by Heath, any stratifiable space is
a $\sigma$-space \cite{5}.  Recall  that any countable product of paracompact
$\sigma$-spaces and any subspace of a paracompact $\sigma$-space are paracompact
$\sigma$-spaces~\cite{11}.

The following notion was introduced by the author in \cite{15} and, independently, by Oka in~\cite{6}.

  \begin{definition}
\label{def2.1}
We say that a closed network $\gamma=\{F_{\alpha}\}$ in a space $Y$ is an \emph{S-network}
if, for any closed set $F \subseteq Y$ and any open
neighborhood $OF$ of $F$, there exists a subsystem $\gamma(F)$ of $\gamma$ such that the union $\bigcup \gamma(F)$
of all elements of $\gamma(F)$ is closed in $Y$ and
\begin{equation}
\label{eq1}
Y \setminus OF\subseteq\bigcup\gamma(F)\subseteq Y\setminus F.
\end{equation}

We refer to a space having a $\sigma$-closure-preserving S-network as an
\emph{S-space}.
\end{definition}

As shown by the author in \cite[Corollary~1]{15}
(and, independently, by Oka in \cite[Theorem~4.3]{6}), the property of being an S-space
is a sufficient condition for the coincidence of the dimensions $\dim$ and
$\Ind$ in the class of stratifiable spaces.

It is unknown whether the property of being an S-space is a sufficient condition for the coincidence of the dimensions
$\dim$ and $\Ind$ in the class of paracompact $\sigma$-spaces.

Definition~\ref{def2.1} requires the fulfillment of \eqref{eq1} for every pair $(F, OF)$, where $F$ is an
arbitrary closed subset of $Y$ and $OF$ is any open neighborhood of $F$. The following purely technical
definition makes it possible to ``reduce the number'' of closed sets in Definition~\ref{def2.1}, i.e., to
concretize the closed sets $F \subseteq Y$; namely, it suffices to consider only closed sets belonging to some
$\sigma$-discrete network of~$X$.

 \begin{definition}[{\cite[Definition~5]{15}}]
\label{def2.2}
We say that a network $\gamma$ in a space $X$ is \emph{intrinsically special} if,
for every $F\in\gamma$ and every open  neighborhood $OF$ of $F$, there exists a subsystem $\gamma(F)$
of the network $\gamma$ such that
the union $\bigcup\gamma(F)$
is closed in $X$ and
 $$
X\setminus OF\subseteq\bigcup\gamma(F)\subseteq X\setminus F.
$$
\end{definition}

 Clearly, if a stratifiable space $X$ is an S-space, then any S-network of $X$ is intrinsically
special. The converse is also true: if a stratifiable space $X$ has an intrinsically special network $\gamma$,
then $X$ is an S-space by virtue of Theorem~4 and Corollary~1 in \cite{15}
(cf.\ conditions (2.1) in the proof of Theorem~2.3 in~\cite{14}).

 Therefore, if a stratifiable space $X$ has an intrinsically special network $\gamma$,
then it suffices to require the existence of a subsystem $\gamma(F)\subset \gamma$
satisfying condition \eqref{eq1} only for open neighborhoods of the form $OF=X\setminus G$,
where $G\in\gamma$ and $G \cap F=\varnothing$, rather than for
all open neighborhoods $OF$ of a closed subset $F$ of~$X$.

 Thus, to prove the equality $\dim X=\Ind X$ for a stratifiable space $X$, it suffices
to construct an intrinsically special network~$\gamma$  in~$X$.

 To this end, we need some definitions and facts.

 \begin{definition}[\cite{14}, \cite{15}]
 \label{def2.3}
 Given a space $Z$ of dimension $\Ind Z=n$, we say that a closed subset $F$ of $Z$ and its
neighborhood $OF$ \emph{determine the dimension} $\Ind Z$ of $Z$ if, for every neighborhood $U$ of $F$
such that $F \subseteq U \subseteq \overline{U} \subseteq OF$, we have $\Ind \operatorname{Fr} U \ge n-1$.
\end{definition}

\begin{definition}[\cite{7}, \cite{15}]
\label{def2.3}
We say that a set $\varphi=\{F_{\alpha}, OF_{\alpha}\}_{\alpha\in A}$ of closed subsets $F_{\alpha}$
 and their open neighborhoods $OF_{\alpha}$ in a space $Z$ is an \emph{everywhere f-system}
(\emph{an f-system}) if, for any (any closed) set $M \subseteq Z$, there exists an index $\alpha_0 \in A$
such that the pair
$\{M\cap F_{\alpha_0}, M\cap OF_{\alpha_0}\}$ determines the dimension~$\Ind M$.
\end{definition}

We refer to a space $X$ having an everywhere f-system as an \emph{everywhere f-space} and
to a space  having an f-system as an \emph{f-space}.

If the space $Z$ in Definition~\ref{def2.3} is only assumed to be normal, then we require the system
 $\{OF_{\alpha}\}_{\alpha\in A}$ of open sets to be $\sigma$-discrete in $Z$. If
$Z$ is a paracompact $\sigma$-space, then it suffices to require the system  $\{OF_{\alpha}\}_{\alpha \in A}$
to be $\sigma$-locally finite in~$Z$.

The point is that we need a set $\varphi=\{F_{\alpha}, OF_{\alpha}\}_{\alpha\in A}$ of closed subsets $F_{\alpha}$
and their open neighborhoods $OF_{\alpha}$  of $Z$ to prove the equality $\dim Z=\Ind Z$. If the space $Z$
it only assumed to be normal, then, to prove this equality, it is required (sufficient) that the system
$\{OF_{\alpha}\}_{\alpha\in A}$ of open sets  be $\sigma$-discrete in $Z$. But if
$Z$ is a paracompact $\sigma$-space, then it suffices that $\{OF_{\alpha}\}_{\alpha \in A}$ be only
$\sigma$-locally finite in $Z$ \cite[Lemma~1.11]{7}.

S.~Oka proved the following theorem.

\begin{theorem}[{\cite[Lemma~3.2]{6}}]
\label{th2.4}
Let $X$ be a paracompact $\sigma$-space, and let $\mathscr B = \bigcup\{\mathscr B_i:
i\in \mathbb N\}$ be a $\sigma$-closure-preserving system of closed sets in $X$ (for each $i=1,2,\dots$,
the system $\mathscr B_i$ of closed sets is closure-preserving in $X$). Then there exists a metric space
$Y_1$ and a compression $\psi\colon X\to Y_1$ such that $\psi(B)$ is closed in $Y_1$ for each
$B\in\mathscr B$ and the system $\psi(\mathscr B_i)=\{\psi(B): B\in \mathscr B_i\}$ is closure-preserving
in $Y_1$ for each $i=1,2\dots$\,.
\end{theorem}

Arkhangel'skii proved the following theorem:

\begin{theorem}[{\cite[Sec.~6]{11}}]
\label{th2.5}
Let $X$ be a paracompact $\sigma$-space, and let $\gamma =\{F_{\alpha}: \alpha \in A\}$ be a
$\sigma$-discrete network in $X$; suppose that $A=\bigcup_{i=1}^{\infty} A_{i}$ and
$\gamma_i=\{F_{\alpha}:\alpha\in A_{i}\}$ is a discrete system of closed subsets of $X$
for each $i=1,2,\dots$\,. Then there exists a condensation $\varphi\colon X\to Y$ of $X$ onto
a metric space $Y$ such that the set $\varphi(F_{\alpha})$ is closed in
$Y$ for every $\alpha\in A$ and the system $\varphi(\gamma_i)=\{\varphi(F_\alpha): \alpha\in A_i\}$
is discrete in $Y$ for each $i=1,2,\dots$,
i.e.,\ $\varphi(\gamma) = \{\varphi(F_\alpha):\alpha\in A\}$ is a closed
$\sigma$-discrete network of~$Y$.
\end{theorem}

Below we present preliminary constructions and related theorems. These constructions are needed
to construct and explicitly describe an intrinsically special network in a stratifiable space.

\begin{construction}
\label{cons2.6}
Let $X$ be a paracompact $\sigma$-space. Suppose that, for each $i\in \mathbb N$,  $\mathscr B_i$
is a closure-preserving system of closed sets in $X$ and let
$\mathscr B = \bigcup\{\mathscr B_i: i\in \mathbb N\}$ (then $\mathscr B$ is a $\sigma$-closure-preserving
system of closed sets in $X$). Suppose also that $\gamma =\{F_{\alpha}: \alpha \in A\}$
is a $\sigma$-discrete network in $X$, where
$A=\bigcup_{i=1}^{\infty} A_{i}$ and
$\gamma_i=\{F_{\alpha}:\alpha\in A_{i}\}$ is a discrete system of closed subsets of $X$
for each $i=1,2,\dots$\,. By Theorems~\ref{th2.4} and~\ref{th2.5}, there exist metric spaces $Y_1$ and $Y$
and condensations $\psi\colon X\to Y_1$ and $\varphi\colon X\to Y$ such that
$\psi(B)$ is closed in $Y_1$ for each
$B\in\mathscr B$, the system $\psi(\mathscr B_i)=\{\psi(B): B\in \mathscr B_i\}$ is closure-preserving in $Y_1$
for each $i=1,2\dots$, the set
$\varphi(F_{\alpha})$ closed in $Y$ for each $\alpha\in A$,
and the system $\varphi(\gamma_i)=\{\varphi(F_\alpha): \alpha\in A_i\}$ is discrete in $Y$.
Consider the diagonal map
$f=\psi\triangle\varphi\colon X\to M_0\subseteq(Y_1\times Y)$. Clearly, this is a condensation and
\begin{enumerate}
\item[(1)]
$f(\gamma)$ is a closed $\sigma$-discrete network in $M_0$;
\item[(2)]
$f(B)$ is closed in $M_0$ for each $B\in \mathscr B$ and the system $f(\mathscr B_i) = \{f(B): B\in \mathscr B_i\}$
is closure-preserving  in $M_0$ for each $i=1,2,\dots$\,.
\end{enumerate}

 Nagata proved an important and interesting theorem for metric spaces \cite[Theorem~V.6]{16}.
Below we state the part of it we need.

\begin{theorem}[{\cite[Theorem V.6]{16}}]
\label{th2.7}
Let $X$ be a metric space, and let $\tau$ be the topology on $X$ generated by its metric.
Then, on the metrizable space $(X,\tau)$, there exists a metric $\rho$
generating the same topology $\tau$ such that, for any real number $\varepsilon>0$, the system
$\{O_{\varepsilon}(x): x\in X\}$ of $\varepsilon$-neighborhoods of $x\in X$ in
the metric $\rho$, is closure-preserving  in $X$.
\end{theorem}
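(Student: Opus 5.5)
The plan is to reduce closure-preservation to a local ``absorption'' property of the metric, and then build a metric with that property from a $\sigma$-discrete base. Call a metric $\rho$ \emph{absorbing} if for every $y\in X$ and every $\varepsilon>0$ there is a $\delta=\delta(y,\varepsilon)>0$ such that
$$
\rho(x,z)<\varepsilon \ \text{ and } \ \rho(y,z)<\delta \quad\Longrightarrow\quad \rho(x,y)<\varepsilon \qquad (x,z\in X).
$$
An absorbing metric gives the conclusion of Theorem~\ref{th2.7} at once. Let $S\subseteq X$ and $y\in\overline{\bigcup\{O_\varepsilon(x):x\in S\}}$. Then $U_\delta(y)$ meets some $O_\varepsilon(x)$ with $x\in S$, so $y\in O_\varepsilon(x)\subseteq\overline{O_\varepsilon(x)}$. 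The reverse inclusion is trivial. So the whole task is to put an absorbing metric on $(X,\tau)$ compatible with $\tau$.

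To build it, I would use the Nagata--Smirnov theorem quoted in the introduction. It gives a base $\bigcup_n\mathscr U_n$, with each $\mathscr U_n$ discrete. For each $U\in\mathscr U_n$, perfect normality gives a continuous $g_U\colon X\to[0,1]$ with $U=g_U^{-1}(0,1]$. Discreteness makes $h_n=\sum_{U\in\mathscr U_n}g_U$ continuous, and at most one summand is nonzero at any point. Instead of the usual $\sum_n 2^{-n}|h_n(x)-h_n(y)|$, I would take coordinates with values in a ``quantized'' scale. Set
$$
d_n(x,y)=\sum_{U\in\mathscr U_n}|g_U(x)-g_U(y)| \quad\text{and}\quad \rho(x,y)=\sum_n 2^{-n}\min\{1,d_n(x,y)\}.
$$
Then replace $\rho$ by $\rho'=\Phi\circ\rho$. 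Here $\Phi$ is a suitable increasing subadditive function, or one rescales the weights $2^{-n}$ to be rationally independent. The aim is that for fixed $y$ and $\varepsilon$ only finitely many coordinates $n\le N(\varepsilon)$ matter up to an error below $\varepsilon$. For the remaining coordinates, $y$ has a neighbourhood on which each of $g_U$, $n\le N$, is nearly constant. Discreteness of the $\mathscr U_n$ leaves only finitely many relevant functions near $y$, which is what makes a uniform $\delta$ possible.

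The verification runs as follows. Compatibility of $\rho$ with $\tau$ is the standard Urysohn-type argument: the $g_U$ separate points from closed sets, and discreteness gives continuity. For absorption, fix $y$ and $\varepsilon$, and choose a neighbourhood $W$ of $y$ meeting at most one member of each $\mathscr U_n$ for $n\le N$. On $W$ the tail $\sum_{n>N}$ is below $\varepsilon/4$. The triangle inequality gives $\rho(x,y)\le\rho(x,z)+\rho(z,y)$, so absorption can fail only when $\rho(x,y)$ sits exactly at the threshold $\varepsilon$. The main obstacle is ruling out this boundary case: $\rho(x,z)<\varepsilon$ for all $z$ near $y$, yet $\rho(x,y)=\varepsilon$.

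I would first try to handle this by perturbing the weights. For each $\varepsilon$, the set of triples producing equality is a ``thin'' boundary, and one tries to choose the metric, following Nagata's original argument in \cite{16}, so that $\rho(x,\cdot)$ has no local maximum at level $\varepsilon$ approached from below. If that fails, the fallback is weaker but still sufficient. It suffices to show that $y\in\overline{O_\varepsilon(x)}$ whenever $U_\delta(y)\cap O_\varepsilon(x)\ne\varnothing$ for all small $\delta$, uniformly in $x$. This uniformity is exactly what the local finiteness near $y$ of the relevant $g_U$ should provide.
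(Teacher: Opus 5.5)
There is a genuine gap. Your reduction targets a property that no compatible metric on a connected space can have. If $\rho$ is absorbing, run your own argument with $S=\{x\}$. Every $y\in\overline{O_\varepsilon(x)}$ has some $z\in O_\varepsilon(x)$ with $\rho(y,z)<\delta(y,\varepsilon)$, hence $\rho(x,y)<\varepsilon$. So every open ball $O_\varepsilon(x)$ is closed, and indeed so is every union of $\varepsilon$-balls. Your derivation even ends with $y\in O_\varepsilon(x)$ rather than $y\in\overline{O_\varepsilon(x)}$, which signals that the target is too strong. Then $X$ has a base of clopen sets, which is false already for $X=\mathbb R$: if $a\neq b$ and $\varepsilon=\rho(a,b)$, then $O_\varepsilon(a)$ is a proper nonempty open subset of a connected space, so it is not closed.

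Consequently, the ``boundary case'' you want to rule out is not a degenerate local maximum of $\rho(x,\cdot)$. It is simply a frontier point of a ball, and every proper ball in a connected space has one, so no choice of weights can remove it. The device $\rho'=\Phi\circ\rho$ cannot help either. A nondecreasing subadditive $\Phi$ that is continuous at $0$ is continuous everywhere, since $|\Phi(t\pm h)-\Phi(t)|\le\Phi(h)$. Hence the $\varepsilon$-balls of $\Phi\circ\rho$ are exactly the $t$-balls of $\rho$ with $t=\sup\{s:\Phi(s)<\varepsilon\}$ (or all of $X$), and closure-preservation for all radii is unaffected.

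Your claim that absorption can fail only when $\rho(x,y)=\varepsilon$ is also wrong, because $\delta$ must not depend on $x$. Take the Euclidean metric, $y=0$, $\varepsilon=1$ and $x_n=1+1/n$. The balls $O_1(x_n)=(1/n,2+1/n)$ meet every $U_\delta(0)$, while $\rho(x_n,0)>1$ for all $n$. Thus $0\in\overline{\bigcup_n O_1(x_n)}$ but $0\notin\bigcup_n\overline{O_1(x_n)}$.

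Your fallback is the correct condition, but it is not a reduction. For fixed $\varepsilon$, the family $\{O_\varepsilon(x):x\in X\}$ is closure-preserving if and only if every $y$ admits a $\delta>0$, independent of $x$, such that $U_\delta(y)\cap O_\varepsilon(x)\neq\varnothing$ implies $y\in\overline{O_\varepsilon(x)}$. Indeed, if this fails, choose $x_n$ violating it for $\delta=1/n$; then $y$ lies in the closure of $\bigcup_n O_\varepsilon(x_n)$ but in no $\overline{O_\varepsilon(x_n)}$. So restating the condition leaves the entire theorem undone, namely the construction of a metric with this property, and the proposal gives no such construction.

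The local finiteness of the $g_U$ near $y$ controls how $\rho(x,z)$ depends on $z\in W$. It does not control how $\rho(x,z)$ depends on the centre $x$, which ranges over all of $X$. For $x\in U_1$ and $z\in U_0$ with $U_0\neq U_1$ in $\mathscr U_n$, one has $d_n(x,z)=g_{U_1}(x)+g_{U_0}(z)$, and the offsets $g_{U_1}(x)$ vary continuously with $x$. You give no argument preventing frontiers of $\varepsilon$-balls whose closures miss $y$ from accumulating at $y$, which is precisely what happens for the Euclidean metric.

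What is missing is a mechanism that makes the balls rigid near each point. For instance, it would suffice that each $y$ have a neighbourhood $W$ on which only finitely many distinct traces $O_\varepsilon(x)\cap W$, $x\in X$, occur, since the closure of a finite union is the union of the closures. Building a metric with rigidity of this kind is the real content of Nagata's theorem, which the paper does not reprove but simply quotes.
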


 The  metric $\rho$ mentioned in Theorem~\ref{th2.7} is called a \emph{Nagata metric}.

 Clearly, the system  $\{\overline{(O_{\varepsilon}(x))}: x\in X\}$ in Theorem~\ref{th2.7} is
closure-preserving in~$X$.
\end{construction}

 Heath~\cite{5} proved the following characterization of stratifiable spaces.

\begin{theorem}[\cite{5}]
\label{th2.8}
A space  $(X,\tau)$ is stratifiable if and only if
there exists a function $g\colon \mathbb N \times X \to  \tau$ such that
\begin{enumerate}
\item[\rm (1)] $\{x\} = \bigcap\{g_n(x): n=1,2,\dots \}$ for every $x\in X$;
\item[\rm (2)] if $x\in g_n(x_n)$ for $n=1,2,\dots$\,, then the sequence $(x_n)_{n\in \mathbb M}$ converges
to~$x$;
\item[\rm (3)] if $H$ is a closed set in $X$ and $y\notin H$, then there exists an $n\in \mathbb N$ such that
  $y\notin\overline{\bigcup\{g_n(x):x\in H\}}$;
\item[\rm (4)] if $y\in g_n(x)$, then $g_n(y)\subseteq g_n(x)$.
\end{enumerate}
\end{theorem}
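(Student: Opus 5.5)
Because the paper takes a stratifiable space to be one with a $\sigma$-closure-preserving quasi-base, I would prove both implications of Theorem~\ref{th2.8} directly from that definition. For the converse I would pass through Borges's stratification definition \cite[Definition~1.1]{4}, together with its equivalence to the quasi-base definition (Gruenhage--Junnila), which the paper already takes as known.

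\emph{Necessity.} By the Gruenhage--Junnila theorem and normality, I take a closed quasi-base $\mathscr B=\bigcup_i\mathscr B_i$ in which each $\mathscr B_i$ is closure-preserving. I set $\mathscr F_n=\mathscr B_1\cup\dots\cup\mathscr B_n$, which is again closure-preserving, and define
$$
g_n(x)=X\setminus\bigcup\{B\in\mathscr F_n: x\notin B\}.
$$
This set is open because the union of a closure-preserving family of closed sets is closed, and it contains $x$. Moreover, $y\in g_n(x)$ holds exactly when every $B\in\mathscr F_n$ containing $y$ also contains $x$. The four conditions then follow in turn.
\begin{enumerate}
\item[(4)] This property becomes transitivity of the relation just described. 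If $y\in g_n(x)$ and $z\in g_n(y)$, then for $B\in\mathscr F_n$ we have $x\notin B\Rightarrow y\notin B\Rightarrow z\notin B$, so $z\in g_n(x)$.
\item[(3)] Let $y\notin H$. Choose $B\in\mathscr B_n$ with $y\in\Int B\subseteq B\subseteq X\setminus H$. Every $x\in H$ misses $B$, so $g_n(x)\cap B=\varnothing$. Hence $\Int B$ is a neighborhood of $y$ disjoint from $\bigcup\{g_n(x):x\in H\}$.
\item[(1)] If $y\neq x$, pick $B\in\mathscr B_m$ with $y\in\Int B\subseteq B\subseteq X\setminus\{x\}$, using the $T_1$ property. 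Then $y\notin g_n(x)$ for all $n\ge m$. The sets $g_n(x)$ decrease in $n$, so their intersection is $\{x\}$.
\item[(2)] Let $x\in g_n(x_n)$ for all $n$, and let $U\ni x$ be open. Take $B\in\mathscr B_m$ with $x\in\Int B\subseteq B\subseteq U$. For $n\ge m$, the set $B$ lies in $\mathscr F_n$ and contains $x$, so it must contain $x_n$ (otherwise $B$ would be removed and $x\notin g_n(x_n)$). Hence $x_n\in U$ for all $n\ge m$.
\end{enumerate}

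\emph{Sufficiency.} Given $g$ satisfying (1)--(4), I would produce a Borges stratification. For open $U$, put
$$
S(n,U)=X\setminus\overline{\bigcup\{g_n(x):x\notin U\}}.
$$
The steps are as follows.
\begin{itemize}
\item Each $S(n,U)$ is open, and $U\subseteq V$ implies $S(n,U)\subseteq S(n,V)$.
\item If $x\notin U$, then $g_n(x)$ is a neighborhood of $x$ disjoint from $S(n,U)$, so $\overline{S(n,U)}\subseteq U$.
\item Condition (3), applied with $H=X\setminus U$, gives $\bigcup_n S(n,U)=U$.
\item If monotonicity in $n$ is required, replace $S(n,U)$ by $\bigcup_{k\le n}S(k,U)$.
\end{itemize}
Thus $X$ is stratifiable in Borges's sense, and hence has a $\sigma$-closure-preserving quasi-base by the Gruenhage--Junnila theorem. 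Only condition (3) is used in this direction.

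The main obstacle is condition (4) in the necessity part. Naive choices such as $X\setminus\overline{S(n,X\setminus\{x\})}$ do not give it. The point of the definition of $g_n$ via $\mathscr F_n$ is that (4) becomes transitivity of the relation ``every member of $\mathscr F_n$ containing $y$ contains $x$.'' What makes this definition legitimate is the closure-preserving property, which guarantees that each $g_n(x)$ is open.
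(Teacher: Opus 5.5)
Your proof is correct, and it supplies more than the paper does. The paper gives no proof of Theorem~\ref{th2.8}; it only quotes it from \cite{5}. Its one trace of an argument is in Construction~\ref{cons2.9}, where it sets $g_i(x)=X\setminus\bigcup\{B\in\mathscr B_i: x\notin B\}$ and asserts that these sets satisfy all four conditions.

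Your necessity direction is that same construction, except that you use the cumulative families $\mathscr F_n=\mathscr B_1\cup\dots\cup\mathscr B_n$, and this difference matters. With the paper's literal choice, (1), (3) and (4) go through exactly as in your argument, but (2) can fail. The reason is that $x\in g_n(x_n)$ only constrains $x_n$ through the single family $\mathscr B_n$. For example, inserting $\{X\}$ as every odd-indexed family keeps a closed $\sigma$-closure-preserving quasi-base, yet gives $g_n\equiv X$ for those $n$. Taking $x_n=z\neq x$ at those indices and $x_n=x$ elsewhere, one has $x\in g_n(x_n)$ for all $n$ while $x_n\not\to x$. Your increasing $\mathscr F_n$, or an unstated assumption that the $\mathscr B_i$ increase, is exactly what (2) needs, and transitivity still gives (4).

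The sufficiency direction does not appear in the paper at all. Your route through the Borges stratification $S(n,U)$, and back to the quasi-base definition via Gruenhage--Junnila, is the standard one and is sound. One small correction: the inclusion $\overline{S(n,U)}\subseteq U$ uses $x\in g_n(x)$. In the paper's formulation $g$ merely takes values in $\tau$, so this fact comes from (1), and that direction uses (1) and (3) rather than (3) alone.
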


Recall that any stratifiable space $X$ is a hereditarily stratifiable paracompact
  $\sigma$-space~\cite{5}.

The following construction is an adaptation of Construction~\ref{cons2.6} to the case we need.

\begin{construction}
\label{cons2.9}
Let $X$ be a stratifiable space. Since $X$ is a $\sigma$-space, it has a closed
$\sigma$-discrete network $\gamma=\{F_{\alpha}: \alpha\in A\}$, where $A=\bigcup_{i=1}^{\infty} A_{i}\}$ and
  $\gamma_i=\{F_{\alpha}:\alpha\in A_{i}\}$ is a discrete system in $X$ for each $i=1,2,\dots$\,.
Moreover, according to \cite{5}, being stratifiable, the space $X$ has a closed
$\sigma$-closure-preserving quasi-base $\mathscr{B} = \bigcup\{\mathscr B_n: n=1,2,\dots \}$,
where $\mathscr B_n = \{B_{\alpha}: \alpha \in A_n\}$ is a closure-preserving
system in $X$ for $n=1,2,\dots$ and all sets $B_{\alpha}$ are closed in $X$.

Recall that, in Construction~\ref{cons2.6}, we defined the diagonal map $f=\psi \triangle \varphi$,
where $\psi$ and $\varphi$ are the same condensations as in Theorems~\ref{th2.4} and~\ref{th2.5}.

For $x\in X$ and $i\in \mathbb N$, we set $g_i(x) = X\setminus \bigcup \{B\in \mathscr B_i:
  x\notin B\}$.  The set $\bigcup \{B \in \mathscr B_i:
  x\notin B\}$ is closed in $X$, because  the system $\mathscr B_i$ is closure-preserving; therefore,
$g_i(x)$ is open in $X$. Since
  $f(\bigcup \{B \in \mathscr B_i: x\notin B\})$ is closed in $M_0$ and $f$ is a condensation, it follows that
 $f(g_i(x))$ is open in $M_0$.
The system $\{g_i(x): x\in X,\ i=1,2,\dots\}$ of open sets satisfies all assumptions of
Theorem~\ref{th2.8}.

Given any set $F\subseteq X$, by $O_i F$ we denote the open set $\bigcup \{g_i(x): x\in F\}$.

The diagonal map $f$ has the following properties:

\begin{enumerate}
\item[(1)]
$f(\gamma)=\{f(F): F\in \gamma\}$ is a closed $\sigma$-discrete network in $M_0$; we denote it by $\gamma^1$;
\item[(2)]
 $f(B)$ closed in $M_0$ for each $B\in \mathscr{B}$, and the system $f(\mathscr B_i)= \{f(B):B\in \mathscr B_i\}$
is closure-preserving in $M_0$ for each $i=1,2\dots$;
\item[(3)]
the set $f(g_i(x))=g^1_i(f(x))$ is open in $M_0$ for any $x\in X$ and $i=1,2,\dots$,
 because the system $\mathscr B_i$ closure-preserving in $X$; therefore, the set $f(O_i F)$
is open in $M_0$ for any $i\in \mathbb N$ and $F\subset X$.
\end{enumerate}

This completes Construction~\ref{cons2.9}.
\end{construction}

Let $\rho$ be a Nagata metric on the metric space $M_0$ in Construction~\ref{cons2.9}
(in the rest of the paper, we assume the space $M_0$ in Construction~\ref{cons2.9} to be
endowed with this metric~$\rho$).

For each $i\in \mathbb N$, we fix the closed closure-preserving
cover $\eta_i = \{\overline{O_{1/i}(x)}: x\in X\}$ of the space $M_0$.
The system $\eta = \bigcup\{\eta_i : i \in \mathbb N\}$ is a
  $\sigma$-closure-preserving network in $M_0$, and we can apply Statement~1.9 of \cite{14}. This
  statement is contained in the proof of Theorem~1 in the paper \cite{10} by Siwiec and Nagata,
although it is not explicitly stated there. Below we formulate it a form convenient for our purposes.

\begin{statement}
\label{stat2.10}
Suppose that a normal space $Y$ has a $\sigma$-closure-preserving network
$\vartheta=\{\Phi_{\alpha}: \alpha\in \mathscr L\}$, where $\mathscr L=\bigcup_{i=1}^{\infty} \mathscr L_{i}$ and
the system $\{\Phi_{\alpha}: \alpha\in \mathscr L_{i}\}$ is closure-preserving in $Y$ for each $i=1,2, \dots$\,.
Then $Y$ has a $\sigma$-discrete network
$\mu=\{W_{\beta}: \beta\in \Gamma\}$, where $\Gamma=\bigcup_{i=1}^{\infty} \Gamma_{i}$ and the system
  $\{W_{\beta}: \beta\in \Gamma_{i}\}$ is discrete in $Y$ for each $i=1,2, \dots$, and any set
  $\mathscr L^{1} \subseteq\mathscr L$ of indices contains a subset
  $\Gamma^{1} \subseteq \Gamma$ such that
  $\bigcup \{\Phi_{\alpha}: \alpha\in \mathscr L^{1}\}=\bigcup \{W_{\beta}: \beta\in \Gamma^{1}\}$.
\end{statement}

\subsection*{($\boldsymbol\ast$)  Construction of an intrinsically special network}
We return to Construction~\ref{cons2.9}. Consider
  the metric space $M_0$. In the case under consideration, the network $\vartheta$ in Statement~\ref{stat2.10}
is the network $\eta$ of $M_0$ defined above. According to Statement~\ref{stat2.10},
$M_0$ has a $\sigma$-discrete network $\mu$ such that every element of $\eta$ is the union
  of some elements of $\mu$. Consider the new closed $\sigma$-discrete network
$\gamma_2$ = $\mu \wedge \gamma^1$ in $M_0$,
where $\gamma^1 = f(\gamma)=\{f(F):F\in \gamma\}$ (see\ property (1) in Construction~\ref{cons2.9}).
Each closed element of the $\sigma$-closure-preserving network $\eta$ is the union of some
  elements of $\gamma_2$. We denote the preimage of
  $\gamma_2$ under
mapping $f$  by $\gamma_0$, i.e., set
$$
\gamma_0 = f^{-1}(\gamma_2)=\{f^{-1}(F): F\in \gamma_2\}.
$$
This is a $\sigma$-discrete network in the space $X$; therefore,
  $\gamma_0 = \bigcup\{\gamma^0_i:  i\in \mathbb N\}$, where $\gamma^0_i$ is a discrete system in $X$
for each $i\in \mathbb N$.
In what follows, we show that the $\sigma$-discrete network $\gamma_0$ is an intrinsically special
network of the space~$X$.

 To show this, we somewhat change the initial closed $\sigma$-closure-preserving quasi-base
  $\mathscr{B} = \bigcup\{\mathscr B_i : i\in \mathbb N\}$ in the space~$X$.

We leave the condensation $f\colon
X\to  M_0$ constructed above in Construction~\ref{cons2.9} intact.
We change only the neighborhoods $g_i(x)$ in $X$.

 Namely, we proceed as follows.

 1. Consider the open covers $\zeta_k = \{O_{1/2k}(y):
  k\in \mathbb N,\ y\in M_0\}$ of the metric space $M_0$. For each open
 cover $\zeta_k$, we choose an open locally finite refinement $\omega_k = \{O_{\beta}: \beta \in A_k\}$
and set
 $$
\overline\omega_k = \{\overline O_{\beta}: \beta \in A_k\}.
$$
Note that, for each
  $\overline O_{\beta} \in \overline\omega_k$, we have
$\diam(\overline O_{\beta})\leq 1/k$.
 Consider the system $f(\mathscr B_k)=\{f(F): F\in \mathscr B_k\}$, $k \in \mathbb N$. This is a
closure-preserving system of closed sets  in $M_0$ (see property (2) in Construction~\ref{cons2.9}),
and the system $\overline\omega_k$ of closed sets is locally finite in $M_0$. Let $\kappa$ denote
  the system $f(\mathscr B_k)\wedge \overline\omega_k$ of closed sets in $M_0$. Note that $\kappa$
   is closure-preserving in $M_0$, because the intersection of any closed set with a closed
   closure-preserving system is closure-preserving and any locally finite system is hereditarily locally finite,
i.e., hereditarily closure-preserving. If $G\in \kappa$, then $\diam(G)\leq 1/k$.
 We set
$$
\mathscr B^0_k = f^{-1}(\kappa)=\{f^{-1}(F): F\in \kappa\}.
$$
Clearly, $\mathscr{B}^0 = \bigcup\{B^0_k : k\in \mathbb N\}$ is a closed
  $\sigma$-closure-preserving quasi-base in the space $X$.

2. We want to show that the $\sigma$-discrete network $\gamma_0$ in $X$ is as required, i.e.,
this is an intrinsically special network in the space $X$.
 Setting $\mathscr B^1_i = \mathscr B^0_i \vee \gamma^0_i$, we obtain the new $\sigma$-closure-preserving
quasi-base $\mathscr{B}^1  = \bigcup\{\mathscr B^1_i: i\in \mathscr N\}$ in $X$.
Note that property (2) of the condensation $f\colon X\to  M_0$ in Construction~\ref{cons2.6} holds not only
for the quasi-base $\mathscr{B}$, but also for $\mathscr{B}^1$, i.e., $f(B)$ is closed in $M_0$ for each
$B\in \mathscr{B}^1$ and the system
$f(\mathscr B_i^1)= \{f(B):B\in \mathscr B_i^1\}$ is closure-preserving in $M_0$ for each $i=1,2\dots$.

 ($\ast$$\ast$) The quasi-base $\mathscr{B}^1$ has a new convenient property; namely,
if $F\in \gamma^0_i$,  $H\subseteq X$ is closed, and $F\cap H = \varnothing$, then
$O_i H\cap F = \varnothing$, where $F \in \mathscr{B}^1_i$ and $O_i H$ is defined as in Construction~\ref{cons2.9} but for
$g_i(x) = X\setminus \cup \{B:  B \in \mathscr B^1_i,
 x\notin B\}$. The sets $g_i(x)$ are open in $X$, and the sets $f(g_i(x))$ are open in~$M_0$.

 Let us summarize.

\subsection{Properties of the constructed objects}
\label{subsec2.1}

\textbf{1.}\enspace
 We have a stratifiable space $X$. It has a closed $\sigma$-discrete network $\gamma$ and a closed
 $\sigma$-closure-preserving quasi-base $\mathscr{B}$. We also have a condensation $f\colon X\to  M_0$
 of the space $X$ onto a metric space $M_0$  (see\ Construction~\ref{cons2.9}) such that
$f(\gamma)$ is a closed $\sigma$-discrete network in $M_0$ and
$f(\mathscr{B})$ is a closed $\sigma$-closure-preserving system in $M_0$. On the metric space $M_0$,
we fix a Nagata metric $\rho$ and the countable system $\{\eta_i = \overline{O_{1/i}(x)}: i \in \mathbb N\}$
of closed closure-preserving covers whose union $\eta = \bigcup\{\eta_i = \overline{O_{1/i}(x)}:
i \in \mathbb N\}$ forms a network in $M_0$. Inn what follows,  $X$, $M_0$, $f$, $\rho$, and $\eta$ are
assumed to be fixed.

\textbf{2.}\enspace
On the space $X$, there arises a new closed $\sigma$-discrete network $\gamma_0 =
\bigcup\{\gamma^0_i:
 i\in \mathbb N\}$, where all $\gamma^0_i$ are discrete systems in $X$ (see $(\ast)$).
Moreover, $f(\gamma_0)$ is a closed $\sigma$-discrete network
in $M_0$ and every element of $\eta$ (i.e., every $\overline{O_{1/i}(x)}$) is a union of elements
of the system~$f(\gamma_0)$.

\textbf{3.}\enspace
Using the $X$ closed $\sigma$-closure-preserving quasi-base  $\mathscr{B}$ of $X$, we constructed a new closed
 $\sigma$-closure-preserving quasi-base
$\mathscr{B}^0 = \bigcup\{\mathscr B^0_k : k\in \mathbb N\}$, where all $\mathscr B^0_k$ are closure-preserving.
This  $\sigma$-closure-preserving quasi-base has the important property that,
for each $B\in \mathscr B^0_k$,  $\diam f(B)\leq 1/k$ (see\ ($\ast$), item~1).

\textbf{4.}\enspace
To the quasi-base $\mathscr{B}^0$ we added the elements of the network $\gamma_0$, setting $\mathscr B^1_i =
 B^0_i \vee \gamma^0_i$. As a result, we obtained the closed $\sigma$-closure-preserving quasi-base $\mathscr{B}^1 =
\{\mathscr B^1_i
 : i\in \mathbb N\}$. For $i\in \mathbb N$, $x\in X$, and $H\subset X$ we set
\begin{gather*}
g_i(x) =
 X\setminus \cup \{B \in \mathscr B^1_i: x\notin B\},\\
O_iH= \bigcup\{g_i(x): x\in H\}.
\end{gather*}

Note that the system of open sets $g_i(x)$, $x\in X$, $i=1,2,\dots$,  satisfies all assumptions
of Theorem~\ref{th2.8}. For any $B\in \mathscr B^1_i$ and any closed set $H\subset X$ disjoint from $B$
 (in particular, for $F\in\gamma^0_i$; see ($\ast$$\ast$)),
we have $O_i H \cap B = \varnothing$.
The sets $g_i(x)$ are open in $X$, and the sets $f(g_i(x))$ are open in $M_0$, because the $\mathscr B^1_i$
are closed closure-preserving systems in $X$,
 $f(\mathscr B^1_i)$ is a closed closure-preserving system in $M_0$, and $f$ is a condensation
(for $x\in X$, $\bigcup\{B\in \mathscr B^1_i: x\notin B\}$ is closed in $X$ and
$f(\bigcup\{B\in \mathscr B^1_i: x\notin B\})$ is closed in~$M_0$).

 Now we can show that $\gamma_0$ is an intrinsically special network in $X$ and, as a consequence, $X$ is
 an S-space.

 \section{Main Theorems}
\label{sec3}

 The main results of this paper are Theorems~\ref{th3.1}, \ref{th3.6}, and \ref{th3.7} and
Corollary~\ref{cor3.8}.

\begin{theorem}
\label{th3.1}
Any stratifiable space $X$  has a closed $\sigma$-discrete
 intrinsically special network \textup(see\ Definition~\ref{def2.2}\textup).
\end{theorem}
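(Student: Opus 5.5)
We will show that the network $\gamma_0=\bigcup_i\gamma^0_i$ built in ($\ast$) is intrinsically special in the sense of Definition~\ref{def2.2}. It is closed and $\sigma$-discrete by item~2 of Subsection~\ref{subsec2.1}. Fix $F\in\gamma_0$, say $F\in\gamma^0_i$, and an open neighborhood $OF$ of $F$. Put $H=X\setminus OF$, a closed set disjoint from $F$.

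\textbf{Candidate subsystem.} Since $F\in\mathscr B^1_i$ and $F\cap H=\varnothing$, property ($\ast$$\ast$) gives $O_iH\cap F=\varnothing$, and $O_iH\supseteq H$. The set $f(O_iH)$ is open in $M_0$ by item~4 of Subsection~\ref{subsec2.1}, so $f(F)$ is a closed set disjoint from it. We will choose the subsystem
$$
\gamma_0(F)=\{G\in\gamma_0:\ G\cap H\neq\varnothing,\ G\subseteq O_iH\}
$$
or a variant cut out through the metric. For the metric variant, let
$$
W=\bigcup\{\overline{O_{1/m}(y)}:\ y\in f(H)\}\subseteq M_0 ,
$$
with $m$ chosen so that $W\cap f(F)=\varnothing$. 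Each $\overline{O_{1/m}(y)}\in\eta_m$ is a union of elements of $f(\gamma_0)$ by item~2, and $\eta_m$ is closure-preserving, so $W$ is closed in $M_0$. Its preimage $f^{-1}(W)$ is then closed in $X$, contains $H$, and is exactly a union of elements of $\gamma_0$. This yields \eqref{eq1}, provided $f^{-1}(W)\cap F=\varnothing$.

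\textbf{Main obstacle.} The difficulty is finding a single $m$ with $f(F)\cap W=\varnothing$. This needs $\rho(f(F),f(H))>0$ uniformly, which is false in general because $f$ is only a condensation. We will instead let $m$ vary with the point. For each $x\in H$, use the quasi-base $\mathscr B^0$: choose $k$ and $B\in\mathscr B^0_k$ with $x\in\Int B\subseteq B\subseteq OF$, so $\diam f(B)\le 1/k$ by item~3. Then cover $H$ by the sets
$$
H_k=\bigcup\{B\in\mathscr B^0_k:\ B\subseteq OF\}\cap H .
$$
Each $H_k$ is closed, since $\mathscr B^0_k$ is closure-preserving and $f(\mathscr B^0_k)$ is closed closure-preserving in $M_0$. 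Next, replace $W$ by
$$
W=\bigcup_k\ \bigcup\{\overline{O_{1/k}(y)}:\ y\in f(H_k)\}\cap f(\overline{O_kH}),
$$
intersecting with sets of type $f(O_kH)$ to keep away from $f(F)$.

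\textbf{Closedness of the countable union.} The remaining problem is that this union is countable, so its closedness is not automatic. We will handle it with Theorem~\ref{th2.8}(2),(3). If points $z_k\in W_k$ accumulate at a point $z\notin W$, then $z_k$ lies within $1/k$ of $f(H_k)\subseteq f(O_kH)$. Condition (4), together with the shrinking diameters, forces $f^{-1}(z_k)$ to converge in $X$ toward $H$, so $f^{-1}(z)\in H$, a contradiction. At the same time, condition (3) applied to the closed set $H$ and the points of $F$ gives an index $n$ beyond which $f(\overline{O_nH})$ misses $f(F)$; on the finitely many earlier levels we use ($\ast$$\ast$) directly.

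Finally we express $f^{-1}(W)$ as a union of elements of $\gamma_0$, using again that every element of $\eta$ and every $f(B)$ is a union of elements of $f(\gamma_0)$ via the refinement $\gamma_2=\mu\wedge\gamma^1$. I expect the convergence argument in the previous paragraph, which ensures closedness while avoiding $F$, to be the delicate point.
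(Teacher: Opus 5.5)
Your proposal has a genuine gap: it never produces a set that is at once a union of members of $\gamma_0$, closed in $X$, and disjoint from $F$. Several steps fail.
\begin{enumerate}
\item[(a)] As written, $H_k$ is empty, since no $B\subseteq OF$ contains a point of $H=X\setminus OF$. You presumably mean $B\cap F=\varnothing$.
\item[(b)] Even after that correction, $\diam f(B)\le 1/k$ gives no lower bound for $\rho(f(x),f(F))$. So the balls $\overline{O_{1/k}(y)}$, $y\in f(H_k)$, may meet $f(F)$.
\item[(c)] Your remedy, intersecting with $f(\overline{O_kH})$, does not repair this. That set need not be closed in $M_0$, since $f$ is not a closed map. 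It is also not a union of members of $f(\gamma_0)$. Nor, in general, is $f(B)$ for $B\in\mathscr B^0$: the refinement $\gamma_2=\mu\wedge\gamma^1$ only makes members of $\eta$ and of $\gamma^1$ such unions. So after the intersection, $f^{-1}(W)$ is no longer of the required form.
\item[(d)] There is no index beyond which $f(\overline{O_nH})$ misses $f(F)$. Theorem~\ref{th2.8}(3) is pointwise: each $y\in F$ gets its own index. The sets $O_nH$ need not be monotone in $n$. And ($\ast\ast$) controls only the single level $i$ with $F\in\gamma^0_i$.
\item[(e)] Your closedness argument aims at closedness of $W$ in $M_0$, which is impossible in general. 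You need $W\supseteq f(H)$ and $W\cap f(F)=\varnothing$, but since $f$ is only a condensation, $\overline{f(H)}$ may meet $f(F)$. This already happens in the sequential fan. Let $H$ contain, on the $j$th spine, one point of $f^{-1}(O_{1/j}(f(\infty)))\setminus\{\infty\}$, and take $F\in\gamma_0$ with $\infty\in F\subseteq X\setminus H$.
\item[(f)] For the same reason, the claim that $z_k\to z$ in $M_0$ forces $f^{-1}(z_k)$ to converge in $X$ is unfounded. Convergence in the coarser metric topology says nothing about $X$. Theorem~\ref{th2.8}(2) needs $x\in g_n(x_n)$ for every $n$, and nothing in your argument supplies that.
\end{enumerate}

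The missing idea is the paper's way of staying away from $F$, and you already had the key ingredient. You observed that $f(O_iH)$ is open in $M_0$ and misses $f(F)$. Put $T=M_0\setminus f(O_iH)$ and let the radii depend on $\rho(\cdot,T)$, constant on the strips $S_j=\{s\in f(H): 1/j\le\rho(s,T)\le 1/(j-1)\}$. Take $F_j=\bigcup\{B_{r_j}(s): s\in S_j\}$ with $r_j<1/(2j^2)$. These sets are closed by the Nagata property (equal radii), are unions of members of $\gamma_2$, and miss $T\supseteq f(F)$. The family $\{F_j\}$ is locally finite in $f(O_iH)$. So disjointness from $F$ is automatic. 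Closedness of $f^{-1}(\bigcup_j F_j)$ then has to be checked only in $X$, at the points of $L_1=f^{-1}(\overline{f(H)}\cap T)$. The paper does this with quasi-base members $B_x\ni x$ missing $H$ with $\diam f(B_x)\le 1/k$ (Constructions~\ref{cons3.4} and~\ref{cons3.5}). Be aware that this boundary step is delicate in the paper too. The induction in Construction~\ref{cons3.5} needs $\rho(S_j,P_k)>0$ for $k<j$. The triangle inequality in Construction~\ref{cons3.4}, however, only gives $\rho(S_j,P_k)\ge 1/j-1/k$. That bound is informative only for $j<k$, and in that range $F_j\cap P_k=\varnothing$ holds automatically anyway.
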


  In what follows, we will show  that the closed $\sigma$-discrete network
$\gamma_0 = \bigcup\{\gamma^0_i: i\in \mathbb N\}$ constructed above, where all $\gamma^0_i$ are discrete in $X$,
is intrinsically special. To this end, we must prove that, for
 any $F\in \gamma_0$ and any closed subset $\Phi$ of $X$ disjoint from $F$,
 there exists a subsystem $\gamma_0(F)$ of the network $\gamma_0$ such that the union
 $\bigcup \gamma_0(F)$ of its elements is closed in $X$, $\Phi \subseteq \bigcup \gamma_0(F)$,  and
 $\bigcup \gamma_0(F) \cap F= \varnothing$.

 Consider the condensation $f\colon X\to  M_0$ in Construction~\ref{cons2.9}. Take $F\in \gamma_0$
and a closed subset $\Phi$ of $X$ disjoint from $F$. Take  the closed
$\sigma$-closure-preserving quasi-base
$\mathscr{B}^1 = \{\mathscr B^1_i : i\in \mathbb N\}$ of $X$ specified in item~4 of Subsection~2.1.
 Let $f(\Phi) = S$ (the set $S$ is not necessarily closed in $M_0$). It follows from
property~($\ast$$\ast$) that there exists a number
$n_0 \in \mathbb N$ for which
 $O_{n_0} \Phi \cap F=\varnothing$. Indeed, $F\in \gamma^0_j$ for some $j\in \mathbb N$, and we can
take $n_0 =j$. We have $F \in \mathscr B^1_{n_0}$; therefore, $O_{n_0} \Phi \cap F=\varnothing$,
 where $O_{n_0} \Phi=\bigcup \{g_{n_0}(x): x\in \Phi\}$ and
  $g_{n_0}(x) = X\setminus \bigcup \{B \in \mathscr B^1_{n_0}: x\notin B\}$ are open
neighborhoods of points $x\in \Phi$ in $X$. The system $\mathscr B^1_{n_0}$ is closure-preserving in $X$,
  and the system $f(\mathscr B^1_{n_0})$ is closure-preserving in $M_0$. Therefore,
   the sets $f(g_{n_0}(x))$ are open in $M_0$ (and hence $f(O_{n_0} \Phi) = O_{n_0}S$ is open in $M_0$).
 The set $O_{n_0} \Phi$ is open in $X$. Let $T$
  denote the set $M_0 \setminus O_{n_0}S$. The set $O_{n_0}S$ is open in $M_0$,
and the set $T$ is closed in~$M_0$.

 Let us perform several constructions  necessary to prove Theorem~\ref{th3.1}.

   \begin{construction}
\label{cons3.2}
 Let us partition $S$ into subsets $S_i$, $i=1,2,\dots$,  as follows:
\begin{align*}
S_1 &= \{s\in S:  \rho(s,T)\geq 1\},\\
S_i &= \{s\in S:  1/i \leq \rho(s,T)\leq 1/(i-1), i=2,3,4,\dots \}.
\end{align*}
Consider the ``strips'' of $O_{n_0}S$ defined by
\begin{align*}
\Pi_1&=\{m\in S: \rho(m,T) \geq1\},\\
\Pi_i&= \{y\in O_{n_0}S:  1/i\leq \rho(y,T)\leq 1/(i-1),\ i=2,3,4,\dots \}.
\end{align*}
Note that $S_i \subseteq \Pi_i$ for $i=1,2,3,\dots$\,. The system of sets
 $\{S_i: i\in \mathbb N\}$ is locally finite in  $O_{n_0}S$, because so is the system of strips.
Let $\overline{S} \cap T=L$. We assume that all sets $S_i$ are nonempty.
Take any $i\in \mathbb N$ and consider the set $S_i$. For  $s\in S_i$,
choose closed neighborhoods $B_{r_i}(s)$ of the same radius $r_i$ such that
$0< r_i < 1/2 i^2$; to be more specific, we take the least $p_i \in \mathbb N$ for which $1/p_i<1/2 i^2$ and
set $r_i=1/{p_i}$. Let $F_i = \bigcup \{B_{1/{p_i}}(s): s\in S_i\}$. Note that $S_i \subseteq F_i$ and
 $B_{1/{p_i}}(s)\in \eta_{p_i}$. The set $F_i$ is closed in $M_0$ by virtue of Nagata's Theorem~\ref{th2.7},
because all $B_{1/{p_i}}(s)$, $s\in S_i$, have the same radius for fixed $i\in \mathbb N$.
Thus, every set
$F_i$ is the union of some elements of the closed $\sigma$-discrete network $\gamma_2$ (see\
$(\ast)$).  Therefore, $f^{-1}(F_i)$ is the union of some subsystem of the system $\gamma_0$ in $X$
(more specifically, of some subsystem of the system $\gamma^0_{p_i})$.
  By Nagata's Theorem~\ref{th2.7}, the set $F_i$ is closed in $M_0$; therefore, $f^{-1} F_i$ is closed in $X$.
  Note that, by construction, the system $\{F_i: i\in \mathbb N\}$ of closed sets in $M_0$ is  locally finite in
$O_{n_0}S$   (by construction, it follows from the inequality $1/p_i<1/2 i^2$ that each $F_i$, $i=1,2,\dots$,
intersect only finitely many
``neighboring'' sets~$\Pi_j$)
    and $F_i \cap T=\varnothing$ for $i=1,2,\dots$\,. Let $\mathfrak{F}$ denote the  set
  $\bigcup \{F_i: i\in \mathbb N\}$. Since $r_i=1/p_i \to  0$ as $i \to \infty$, it follows that
  $\overline{\mathfrak{F}}\cap T =L= \overline{S} \cap T$; moreover, $\mathfrak{F}$ is closed in
  $O_{n_0}S$.
\end{construction}

   Note at once that if the set $S$ is closed in $M_0$ or
  $\overline{S} \cap f(F)= \varnothing$ for some $F\in \gamma_0$, then, obviously, there exists a subsystem
of $\eta$ whose union is closed in $M_0$ and disjoint from~$f(F)$.

Indeed, the system $\eta = \bigcup\{\eta_i : i \in \mathbb N\}$,
where $\eta_i = \{\overline{O_{1/i}(x)}: x\in X\}$, is a
  $\sigma$-closure-preserving network in $M_0$. Therefore, the union of all elements of $\eta$
intersecting $\overline{S}$ but disjoint from $f(F)$ is the required closed set in $M_0$ (because
 $\rho(s,f(F))>0$ and $\diam O_{1/i}(x) \to 0$ as $i\to \infty$ for each point $s\in \overline{S}$).

Clearly, this union of elements of $\eta$ is the union some elements of $f(\gamma_0)$
(see item~2 in Section~\ref{subsec2.1}).

   \begin{remark}
\label{rem3.3}
 Let us explain what purpose the closed sets  $\{F_i: i\in \mathbb N\}$ serve.
In what follows, we will show that the
 sets $F_i$ and $S_i \subseteq F_i$, $i=1,2,\dots$, can be chosen so that
 $f^{-1}(\mathfrak{F})$ is closed in $X$. Since each $f^{-1} F_i$ is the union of some subsystem
 of $\gamma_0$, it follows that $f^{-1}(\mathfrak{F})$ is the closed union of some subsystem of
 $\gamma_0$  and $\Phi \subseteq f^{-1}(\mathfrak{F})$.
  But $f^{-1}(\mathfrak{F})\cap F= \varnothing$, where $F\in \gamma_0$, and by construction
$F\in f^{-1}(T)$. Therefore, $\gamma_0$ is an intrinsically special network.
\end{remark}

  \begin{construction}
\label{cons3.4}
 Let us denote $f^{-1}L$ by $L_1 \subseteq X$ (the set $L=\overline{\mathfrak{F}}\cap T =
\overline{S} \cap T$ was defined at the end of Construction~\ref{cons3.2}). Take a point $x$ in $L_1$.
Let us show
 that the sets $F_i$, $i=1,2,\dots$, can be chosen  so that $x\notin \overline{f^{-1}(\mathfrak{F})}$.

  Note that $x\notin \Phi$, because $x \in L_1$. Hence there exists an element $B$ of the closed
 $\sigma$-closure-preserving quasi-base $\mathscr{B}^0 = \{B^0_k : k\in \mathbb N\}$ such that
$x\in \Int(B) \subseteq B$, $B\cap \Phi=\varnothing$, and, therefore, $f(B) \cap S= \varnothing$.
  Let $B\in B^0_k$. Then  $\diam f(B) \leq 1/k$
(see\ item~3 in Section~\ref{subsec2.1}). We set $f(x)=y\in L$
and $f(B)=P$; then $P$ is closed in $M_0$. Let us estimate the distance from a point of $P$ to $S_i$.
Take $y_i \in S_i$ and $z\in P$. The triangle inequality for the points $y$, $z$, and $y_i$ in the
metric space $M_0$ implies $\rho(y_i,z)\geq \mod(\rho(y_i,y) - \rho(y,z))$. By the definition of the distance
from a point to a closed set, we have $\rho(y_i,y)\geq 1/i$, and by the definition of diameter, we have
$\rho(y,z)\leq 1/k$. Therefore, $\rho(y_i,z) \geq \mod(1/i - 1/k)$. Here $k$ is a fixed positive integer
and $i=1,2,3,\dots$\,.
It follows that  $\rho(P,S_i)=d_i>0$ for all $i\neq k$; $i=k$, it may happen that $\rho(P,S_i)=0$.
For $i\neq k$, we define $\{F_i: i\neq k\}$ as in Construction~\ref{cons3.2}, choosing
$p_i \in \mathbb N$ so that $1/p_i< \min(1/2 i^2, d_i)$. Then $F_i \cap P=\varnothing$ for $i\neq k$.
For $i=k$, we define a closed set $F_k$ precisely as described in Construction~\ref{cons3.2}
(and choose $p_i$ so that  $1/p_i < 1/2i^2$).
This closed set $F_k$ is the union of some subsystem of the closed $\sigma$-discrete network $\gamma_2$
(see\ $(\ast)$).
 This is the only $F_i$ which may intersect $P$. However, as mentioned in Construction~\ref{cons3.2},
we have $F_k \cap T = \varnothing$; therefore, $y \notin F_k$ and $x\notin f^{-1}(F_k)$, which implies the
existence of an open  neighborhood $Ux\subseteq X$ of $x$ for which $Ux \cap f^{-1}(F_k)=\varnothing$. We set
$Vx=Ux \cap \Int(B)$. Since $F_i \cap P=\varnothing$ for
  $i\neq k$ and  $x\in \Int(B)$, it follows that $Vx\cap f^{-1}(\mathfrak{F}) =
  \varnothing$. Therefore, $x\notin \overline{f^{-1}(\mathfrak{F})}$.
\end{construction}

  The set $f^{-1}(\mathfrak{F})$ is the union of a subsystem of the closed
  $\sigma$-discrete network $\gamma_0$.

  Now we must define sets $F_i$, $i=1,2,\dots$, so that
$L_1$ and $\overline{f^{-1}(\mathfrak{F})}$ be disjoint.

\begin{construction}
\label{cons3.5}
As in Construction~\ref{cons3.4}, take any point $x \in L_1$. Since
$x\notin \Phi$, it follows that there exists an element $B_x$ of the closed $\sigma$-closure-preserving quasi-base
$\mathscr{B}^0 = \bigcup\{\mathscr B^0_k: k=1,2,\dots \}$ such  that  $x\in \Int(B_x)\subseteq B_x$
and $B_x \cap \Phi=\varnothing$.

We denote the closed set $\bigcup \{B_x:  B_x \in B^0_k\}$ by $Q_k$. Note that
  $L_1 \subseteq \bigcup\{\Int Q_k:  k=1,2,\dots \}$.
Take a $k\in \mathbb N$ for which $B_x \in \mathscr B^0_k$. We have
 $\diam f(B_x)\leq 1/k$. Let us denote the closed set $f(Q_k)$ by $P_k$. Recall that $L=f(L_1)$.

The sets $P_k$, $k=1,2,\dots$, will play the role of the set $P$ in Construction~\ref{cons3.4} in turns.
Some of these sets may be empty; for convenience, we will not
throw them out.

 Consider the sets $S_i$ in Construction~\ref{cons3.2}.
We associate each $S_i$, $i=1,2,\dots$, with the set $P_i$ (possibly empty). We construct sets $F_i$
  by induction. Note again that  the sets $Q_k$ and,
accordingly, $P_k$ may be empty for some $k\in \mathbb N$. We will take this into account in our construction;
 namely, if $P_k = \varnothing$, then we will assume that
 $\rho(S_i, P_k)=d_{ik} = {+}\infty$, i.e., that $\rho(S_i, P_k)=d_{ik}>N$ for  any
 positive integer $N$ (this somewhat artificial assumption is needed to obtain
 correct minima of finite sequences containing ${+}\infty$ and at least
one real number).
  Recall (see\ Construction~\ref{cons3.4}) that $\rho(S_i, P_k)=d_{ik}>0$ for any $k, i\in \mathbb N$,
possibly except in the  case
   $k=i$, in which $\rho(S_i, P_k)$ may equal zero. We argue by induction on $i\in \mathbb N$.

 Let $i=1$.

 Regardless of whether  the set $P_1$ is empty or not, we choose $p_1$ so that
 $1/p_1 < 1/(2\cdot 1^2)$ (i.e.,\ $p_1>2$) and define the closed set $F_1$ as in
Construction~\ref{cons3.2}. The set $F_1$ is the union of a subsystem of the closed
 $\sigma$-discrete network $\gamma_2$ (see~($\ast$)).

 Let  $i=2$.

 We construct a closed subset $F_2$ of $M_0$  as follows. We set
 $\rho(S_2 , P_1)=d_{2,1}$, where $d_{2,1}>0$ (if $P_1$ is empty, then $d_{2,1}={+}\infty$). We choose
 $p_2 \in \mathbb N$ so that
$1/p_2 < \min(1/2 \cdot 2^2, d_{2,1})$ (see\ Construction~\ref{cons3.4}).
Now we define $F_2$ as in Construction~\ref{cons3.2}. Note that $F_2 \cap P_1=\varnothing$
 ($F_2$ may intersect $P_2$ if $P_2$ is nonempty). By construction, $F_1$ may intersect both
 $P_1$ and with $P_2$, but $F_2$ may intersect only $P_2$. Each of the closed sets $F_1$ and $F_2$
is the union of a subsystem of the closed $\sigma$-discrete network $\gamma_2$ (see\ $(\ast)$ and
Construction~\ref{cons3.2}).
Moreover, $S_1 \subseteq F_1$ and  $S_2 \subseteq F_2$  (the set $S_i$ are the same as in
Construction~\ref{cons3.2}), and the sets $F_1$ and $F_2$ are disjoint from the closed set~$T$.

The induction hypothesis:

 Suppose that, for $i=1,2,\dots, n$, sets $F_1$, $F_2$, \dots, $F_n$ are constructed and have
the following properties:

\begin{enumerate}
\item[(a)]
$S_i \subseteq F_i$, $T \cap F_i = \varnothing$, and $F_i$ is closed in $M_0$.
  Each set $F_i$, $i=1,2,\dots ,n$, is the union of a subsystem of the closed $\sigma$-discrete
 network $\gamma_2$ (see\ $(\ast)$ and Construction~\ref{cons3.2}).
\item[(b)]
 For each fixed $j_0$, $1\leq j_0<n$, the closed set $P_{j_0}$
 may intersect only those $F_j$ for which $j\geq j_0$. If
 $j_0 <j \leq n$, then $P_{j_0}\cap F_j=\varnothing$. If
$j_0 =n$, then $P_n$ may intersect any of the sets
  $F_j$, $1\leq j\leq n$.
\end{enumerate}

By construction, the sets $F_1$ and $F_2$ have properties (a) and (b).

 Let  $i=n+1$.

 We construct a closed set $F_{n+1}$ as follows. The set $F_{n+1}$ will not depend on the sets $F_1, F_2, \dots, F_n$.
It depends on $S_{n+1}$ and the sets $P_k$, where $1 \leq k \leq n$.
  Take $S_{n+1}$. For $k \leq n$, we set
 $\rho(S_{n+1}, P_k)=d_{{n+1},k}$; then $d_{{n+1},k}>0$
(this is proved in Construction~\ref{cons3.4}) and $d_{{n+1},k}={+}\infty$ if $P_k$ is empty.
Choose $p_{n+1} \in \mathbb N$ so that
  $1/p_{n+1} < \min(1/2(n+1)^2, \{d_{{n+1},k}): k=1,2,\dots ,n\})$.
Next,  for each point $s\in S_{n+1}$,  we take
 the closed neighborhood $B_{1/{p_{n+1}}}(s)\in \eta_{p_{n+1}}$
and define $F_{n+1}$ as in Construction~\ref{cons3.2}, i.e., put
$F_{n+1} = \bigcup \{B_{1/{p_{n+1}}}(s): s\in S_{n+1}\}$. The set $F_{n+1}$
is closed by Theorem~\ref{th2.7}, and it is
the union of a subsystem of the closed
 $\sigma$-discrete network $\gamma_2$ (see\ $(\ast)$ and Construction~\ref{cons3.2}).

  By construction, the following conditions hold:

\begin{enumerate}
\item[(i)]
$S_{n+1} \subseteq F_{n+1}$, $F_{n+1} \cap T = \varnothing$, the set $F_{n+1}$ is closed in
 $M_0$, and  $F_{n+1}$ is the union of a subsystem of the
closed
 $\sigma$-discrete network $\gamma_2$;
\item[(ii)]
the closed subset  $F_{n+1}$ of  $M_0$  may intersect $P_{n+1}$, but $F_{n+1} \cap P_k =
 \varnothing$ for $k=1,2,\dots ,n$;  this follows from the choice of the number $1/p_{n+1}$.
\end{enumerate}

  Therefore, by construction, the closed sets $F_1$, $F_2$, \dots,
$F_n$, $F_{n+1}$ have properties (a) and~(b). Next, we construct the set $F_{n+2}$ in a similar way.

 Thus,  we have constructed the  system of closed subsets $F_i$ of $M_0$,
$i=1,2,\dots$, which is locally finite in $O_{n_0}S$. We have
  $S_i \subseteq F_i$ and $T \cap F_i = \varnothing$, and each set $F_i$ is the union of some
 subsystem of the closed $\sigma$-discrete network $\gamma_2$; moreover,
  each closed set $P_i$,  $i=1,2,\dots$, may have nonempty intersections with only finitely many
sets $F_j$, namely, with those $F_j$ for which $j\leq i$.

  As in Construction~\ref{cons3.2}, we set $\mathfrak{F} = \bigcup \{F_i: i\in \mathbb N\}$. The set
$\mathfrak{F}$ is the union of a subsystem of the closed $\sigma$-discrete network
$\gamma_2$ (because so is each $F_i$). Recall that $\gamma_0 = f^{-1}(\gamma_2)$.
Since each $f^{-1} F_i$ is the union of a subsystem of $\gamma_0$, it follows that
so is $f^{-1}(\mathfrak{F})$. It is clear from the construction that
$f^{-1}(\mathfrak{F}) \cap F= \varnothing$. Let us show that
  $f^{-1}(\mathfrak{F})$ is closed in $X$.

  Indeed, take $x\in L_1$. Choose the least $k\in \mathbb N$ for which $x\in \Int(Q_k)$; we have
 $x\notin \Int(Q_i)$ for $i<k$. Recall that $Q_k=f^{-1}(P_k)$; therefore, $Q_k$ may intersect only
finitely many closed sets $f^{-1}(F_i)$, namely, only those with $1\leq i \leq k$. The remaining
  sets $f^{-1}(F_i)$ do not intersect $Q_k$. Therefore, the point $x\in L_1$ has an open neighborhood
  $Vx\subseteq \Int(Q_k)$ such that  $Vx\cap f^{-1}(\mathfrak{F})=\varnothing$ (see a similar argument in
  Construction~\ref{cons3.4}).

  If $x\notin(L_1 \cup f^{-1}(\mathfrak{F}))$, then the existence of an open neighborhood $Ux$ of $x$ for which
  $Ux\cap f^{-1}(\mathfrak{F})=\varnothing$ follows from the closedness of
  $\mathfrak{F}$ in $O_{n_0}S$ and the relations $\overline{\mathfrak{F}}\cap T =L=\overline{S} \cap T$.
  The required neighborhood $Ux$ of $x$ is the preimage of the corresponding neighborhood $Oy$ of the
point $y= f(x)$ in $M_0$. This proves the closedness of $f^{-1}(\mathfrak{F})$ in~$X$.
\end{construction}

 \begin{proof}[Proof of Theorem~\ref{th3.1}]
 Let us prove that  the closed $\sigma$-discrete network $\gamma_0 =
\bigcup\{\gamma^0_i: i=1,2,\dots \}$, where all $\gamma^0_i$ are discrete in $X$, constructed above
is intrinsically special.
We must show that, for any $F\in \gamma_0$ and any closed subset $\Phi$ of $X$ disjoint from
  $F$, there exists a subsystem $\gamma_0(F)$ of the network $\gamma_0$ such that the union
$\bigcup \gamma_0(F)$ is closed in $X$, $\Phi \subseteq \bigcup \gamma_0(F)$,  and
  $\bigcup \gamma_0(F) \cap F= \varnothing$.

 Construction~\ref{cons3.5} (see\ also Remark~\ref{rem3.3}) implies the existence of a closed
subset $\Phi_1$ of $X$ such that
  $\Phi \subseteq \Phi_1$, $\Phi_1 \cap F= \varnothing$ and $\Phi_1$ is the union of a
  subsystem $\gamma_0(F)$ of $\gamma_0$  in $X$, i.e., $\Phi_1$ = $\bigcup \gamma_0(F)$;
this is $\Phi_1 = f^{-1}(\mathfrak{F})$.

 This completes the proof of Theorem~\ref{th3.1}.
\end{proof}

\begin{theorem}
\label{th3.6}
Any stratifiable space $X$ is an S-space.
\end{theorem}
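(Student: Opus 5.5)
The plan is to derive Theorem~\ref{th3.6} from Theorem~\ref{th3.1}, using the reduction stated in Section~\ref{sec2}. That reduction says that if a stratifiable space has an intrinsically special network, it is an S-space (Theorem~4 and Corollary~1 of \cite{15}). Since this is a cited result, I would nonetheless sketch how the S-network is produced, so that the argument is self-contained modulo Theorem~\ref{th3.1}.

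Let $\gamma_0=\bigcup_i\gamma^0_i$ be the closed $\sigma$-discrete intrinsically special network given by Theorem~\ref{th3.1}. A $\sigma$-discrete system of closed sets is $\sigma$-closure-preserving, so $\gamma_0$ already has the right kind of network. It remains to show that $\gamma_0$ (or a closely related network) satisfies \eqref{eq1} for an \emph{arbitrary} closed $F\subseteq X$ and an arbitrary open $OF\supseteq F$.

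\textbf{Step 1.} Put $\Phi=X\setminus OF$, which is closed and disjoint from $F$. For each $G\in\gamma_0$ with $G\cap F=\varnothing$, the set $X\setminus F$ is an open neighborhood of $G$. Intrinsic specialness (Definition~\ref{def2.2}) then gives $\gamma_0(G)\subseteq\gamma_0$ with $\bigcup\gamma_0(G)$ closed and $F\subseteq\bigcup\gamma_0(G)\subseteq X\setminus G$. Roles are swapped here: $F$ is now the set being covered.

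\textbf{Step 2.} Since $\gamma_0$ is a network and $X$ is perfectly normal, I would cover $\Phi$ by the elements $G\in\gamma_0$ meeting $\Phi$ and contained in $X\setminus F$. Each such $G$ is separated from $F$ in the sense of Step~1. The candidate is $\bigcup\gamma(F)=\bigcup\{G\in\gamma_0: G\cap F=\varnothing,\ G\subseteq W\}$ for a suitable open $W$ with $\Phi\subseteq W\subseteq\overline W\subseteq X\setminus F$.

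\textbf{Step 3.} The main obstacle is the closedness of this union. A union of members of a $\sigma$-discrete family need not be closed, because the countably many discrete layers can accumulate at points of $F$.

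To handle this I would follow Construction~\ref{cons3.5}. Transfer to the metric space $M_0$ via $f$, split $\Phi$ into strips $S_i$ according to $\rho(\cdot,f(F))$, and thicken each strip by Nagata balls of radius $1/p_i$, with $p_i$ chosen inductively. The choice must keep the thickened strips away from the sets $P_k=f(Q_k)$, where the $Q_k$ come from quasi-base neighborhoods of points of $\overline{\Phi}\cap F$ missing $\Phi$. The inductive choice $1/p_{n+1}<\min(1/2(n+1)^2,d_{n+1,k})$ then makes the union locally finite near each point of $F$, and hence closed. In this run of the argument, only the role of the single network element $F$ in Theorem~\ref{th3.1} is taken by a general closed set $F$. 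The one point to check is that $F\cap\Phi=\varnothing$ still gives $f(\Phi)\cap\overline{f(\Phi)}$-type separation, namely the positivity of $d_{ik}$ for $i\ne k$. This holds for the same diameter reason as in Construction~\ref{cons3.4}, because that argument used only that $F$ is closed.

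Finally, the thickened sets are unions of members of $\gamma_2$ and hence of $\gamma_0$, so $\gamma_0$ is an S-network and $X$ is an S-space. If the direct rerun of Step~3 for general $F$ turned out to fail, I would fall back on citing \cite[Theorem~4, Corollary~1]{15} as stated in Section~\ref{sec2}.
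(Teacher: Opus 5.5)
Your fallback is essentially the paper's proof. The paper takes the intrinsically special network from Theorem~\ref{th3.1}, uses \cite[Theorem~4]{15} to make $X$ an everywhere f-space, and uses \cite[Assertion~2.5]{14}: for stratifiable spaces this is equivalent to being an S-space. The self-contained argument you put first, however, has a genuine gap. Your claim that Construction~\ref{cons3.4} ``used only that $F$ is closed'' is false. The proof of Theorem~\ref{th3.1} is anchored on $T=M_0\setminus f(O_{n_0}\Phi)$, a \emph{closed} subset of $M_0$ that contains $f(F)$ and misses $S=f(\Phi)$. This $T$ exists only because $F\in\gamma^0_{n_0}\subseteq\mathscr B^1_{n_0}$ forces $g_{n_0}(x)\cap F=\varnothing$ for all $x\in\Phi$ (property $(\ast\ast)$). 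The construction depends on $T$ at every stage: the strips are measured by $\rho(\cdot,T)$, the positivity of $d_{ik}$ uses $\rho(y_i,y)\ge 1/i$ for $y\in L\subseteq T$, and $F_i\cap T=\varnothing$ is what gives $f^{-1}(\mathfrak F)\cap F=\varnothing$.

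For an arbitrary closed $F$ no such $T$ exists. The map $f$ is only a condensation, and Theorem~\ref{th2.8}(3) gives an $n$ with $y\notin\overline{O_n\Phi}$ only pointwise in $y\in F$. Concretely, if $X$ is not metrizable then $f$ is not a homeomorphism. So there are a closed $F$ and a point $x\notin F$ with $f(x)\in\overline{f(F)}$. Take any closed $\Phi\ni x$ disjoint from $F$. Then every closed subset of $M_0$ containing $f(F)$ meets $f(\Phi)$, and $\rho(f(x),f(F))=0$. Hence $f(x)$ lies in none of your strips, and no ball around it misses $f(F)$: the rerun cannot even start.

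Steps~1--2 do not repair this. Step~1 gives closed unions that contain $F$ and miss one element $G$ at a time. Making them miss all of $\Phi$ would require intersecting over many $G$, which destroys the form ``union of network elements''. Step~2's candidate is an arbitrary union of members of $\gamma_0$ inside $W$, and its closedness is the entire difficulty. Passing from ``network element versus closed set'' to ``closed set versus closed set'' is exactly the nontrivial content the paper imports from \cite{15} and \cite{14}. That passage goes through everywhere f-systems, a dimension-theoretic device, not through the metric construction in $M_0$. Note also that those results only show that $X$ has \emph{some} $\sigma$-closure-preserving S-network. Your conclusion that $\gamma_0$ itself is an S-network is more than the paper establishes. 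The proof should simply be the citation argument, with the direct rerun dropped.
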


 \begin{proof}
According to Theorem~\ref{th3.1}, any stratifiable space has an intrinsically special network $\gamma$.
The author showed in \cite[Theorem~4]{15} that, in this case, $X$ is an everywhere f-space. In the author's
paper \cite[Assertion~2.5]{14}, the equivalence of the following conditions for any stratifiable space $X$
was proved:

\begin{enumerate}
\item[1.]
$X$ is an everywhere f-space;
\item[2.]
$X$ is an S-space.
\end{enumerate}

Thus, any stratifiable space is an S-space.

 This proves Theorem~\ref{th3.6}.
\end{proof}

 Theorem~\ref{th3.6} and Corollary~1 in the author's paper \cite{15} (cf.\ \cite[proof of Theorem~2.3, (2.1)]{14})
imply the following
theorem.

\begin{theorem}
\label{th3.7}
For a stratifiable space $X$, the following conditions are equivalent:
\begin{enumerate}
\item[\rm (a)]
$\dim X\leq n$\textup;
\item[\rm (b)]
$\Ind X \leq n$\textup;
\item[\rm (c)]
$X=\bigcup{X_i}$, $i=1,2,\dots n+1$, where each  $X_{i}$ is
a $G_\delta$-set in $X$ and $\dim{X_i}\leq 0$ for $i=1,2,\dots n+1$\textup;
\item[\rm (d)]
the space $X$ is  the image of a stratifiable space $Y$ with
$\dim Y=0$ under a perfect $(n+1)$-to-$1$ map.
\end{enumerate}
\end{theorem}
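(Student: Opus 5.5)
The plan is to take (a)$\Leftrightarrow$(b) directly from Theorem~\ref{th3.6} together with Corollary~1 of \cite{15}. Those results say that for stratifiable spaces the S-space property forces $\dim X=\Ind X$, and Theorem~\ref{th3.6} supplies that property for every stratifiable $X$. Since a stratifiable space is perfectly normal and paracompact, the inequality $\dim X\le\Ind X$ already holds for normal spaces by the classical theory in \cite{1}. The real content of (a)$\Leftrightarrow$(b) is therefore the reverse inequality $\Ind X\le\dim X$. For this I would follow the scheme indicated in \cite[proof of Theorem~2.3, (2.1)]{14}: the $\sigma$-closure-preserving S-network lets one build, for a closed $F$ and a neighborhood $OF$, a partition whose boundary has $\dim$ at most $n-1$. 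Induction on $n$ then uses the hereditary nature of stratifiability, so that the boundary is again stratifiable and hence an S-space.

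For (a)$\Rightarrow$(c) I would use the decomposition theorem for perfectly normal (hereditarily paracompact) spaces. With $\dim X\le n$, the equality $\dim=\Ind$ on all subspaces (each subspace is stratifiable, so we may apply (a)$\Leftrightarrow$(b) to subspaces) gives the subset theorem: $\dim A\le\dim X$ for every $A\subseteq X$. One then writes $X$ as a union of $n+1$ zero-dimensional sets, first as $F_\sigma$ pieces via the countable sum theorem, and regroups them as in the metric Morita--Kat\v{e}tov argument. The $G_\delta$ requirement comes from a standard enlargement step. In a perfectly normal space, a zero-dimensional set $Z$ lies in a $G_\delta$ set $G$ with $\dim G\le 0$, by the analogue of Tumarkin's theorem. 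That analogue is available here because we have $\dim=\Ind$ hereditarily and the sum theorem for closed $\sigma$-discrete networks. For (c)$\Rightarrow$(a), the addition theorem ($\dim(A\cup B)\le\dim A+\dim B+1$) holds in hereditarily normal spaces when $\dim=\Ind$ hereditarily. Applying it to $n+1$ pieces gives $\dim X\le n$.

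For (a)$\Rightarrow$(d) I would copy the Morita--Hurewicz construction. Given the decomposition $X=\bigcup_{i=1}^{n+1}X_i$ from (c), take $Y$ to be a zero-dimensional ``resolution'' inside $X\times\{1,\dots,n+1\}$. In the metric case one refines $Y=\bigsqcup X_i$ with a finer topology that makes the natural projection perfect. Here I would instead realize $Y$ as a closed subspace of $X\times C$, where $C$ is a zero-dimensional metric space, obtained by closing the graphs of the pieces. Closed subspaces and products of stratifiable spaces with metric spaces are stratifiable, so $Y$ is stratifiable. Its dimension is $0$ by the product theorem, using that $\dim$ for paracompact $\sigma$-spaces behaves well under products with metric spaces. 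For (d)$\Rightarrow$(a), the Hurewicz theorem for perfect maps between paracompact spaces gives $\dim X\le\dim Y+(n+1)-1=n$.

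I expect the main obstacle to be the implication (a)$\Rightarrow$(d), specifically ensuring that the map is perfect and exactly $(n+1)$-to-$1$ while keeping $Y$ stratifiable. My first attempt would transfer the problem through the condensation $f\colon X\to M_0$ of Construction~\ref{cons2.9}, do the construction in the metric space $M_0$, and pull it back along $f$. If perfectness fails under that pullback, I would fall back on the fact that closed images of stratifiable spaces are stratifiable and build $Y$ as a closed subspace of $X\times\{0,1\}^{\omega}$.
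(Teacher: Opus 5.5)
Your treatment of (a)$\Leftrightarrow$(b) is exactly the paper's: Theorem~\ref{th3.6} plus \cite[Corollary~1]{15}. The converse directions are also fine classically. (c)$\Rightarrow$(a) holds because $\dim X_i\le 0$ iff $\Ind X_i\le 0$ for normal $X_i$, the addition theorem for $\Ind$ holds in perfectly normal spaces, and $\dim\le\Ind$ for normal spaces. (d)$\Rightarrow$(a) follows from Hurewicz's theorem for closed maps with fibres of at most $n+1$ points.

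The gap is in the forward implications. For (a)$\Rightarrow$(c), there is no decomposition theorem for perfectly normal paracompact spaces that you can invoke. The Morita--Kat\v{e}tov argument you want to copy rests on the metric base theorem: a $\sigma$-locally finite base whose members have boundaries of dimension $\le n-1$. The union $Z$ of those boundaries is then an $F_\sigma$ of dimension $\le n-1$, and $X\setminus Z$ is the zero-dimensional $G_\delta$ piece. A stratifiable space gives you only a $\sigma$-closure-preserving quasi-base, with no control on boundaries. Building a $\sigma$-discrete family of low-dimensional partitions that serves \emph{every} pair $(F,OF)$ is precisely the job of the intrinsically special network/S-network. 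Your ``analogue of Tumarkin's theorem'' has the same defect. The metric enlargement theorem is proved with metric-specific tools, and hereditary $\dim=\Ind$ plus sum theorems do not by themselves yield $G_\delta$ enlargements. (The subset theorem $\dim A\le\dim X$ holds in every perfectly normal space, so that is not where the difficulty lies.)

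For (a)$\Rightarrow$(d) you give no construction. A closed subspace of $X\times C$ with $C$ zero-dimensional need not be zero-dimensional; the product theorem only gives $\dim(X\times C)\le n$. Routing through $f\colon X\to M_0$ fails for a different reason than the one you anticipate. First, a condensation does not bound $\dim M_0$ by $\dim X$ (a discrete space condenses onto $[0,1]$), so $M_0$ need not admit a zero-dimensional $(n+1)$-to-$1$ perfect preimage at all. Second, suppose some $g\colon Z\to M_0$ did. The pullback $Y=\{(x,z): f(x)=g(z)\}\to X$ would in fact be perfect with fibres of at most $n+1$ points. But $Y$ merely condenses onto $Z$, and that says nothing about $\dim Y$: Erd\H{o}s space has dimension $1$ yet condenses onto a subspace of $\mathbb Q^\omega$. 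So the obstruction is zero-dimensionality of $Y$, not perfectness, and your fallback inside $X\times\{0,1\}^\omega$ meets the same obstruction.

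The paper attempts none of this. \cite[Corollary~1]{15} (cf.\ Oka's \cite[Theorem~4.3]{6}) already states that (a)--(d) are equivalent for every stratifiable S-space, and Theorem~\ref{th3.6} makes every stratifiable space an S-space. So (c) and (d) should also be obtained from that citation, that is, from the S-network machinery, not from metric-space techniques transplanted to $X$.
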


 A theorem similar to Corollary~1 in the author's paper \cite{15} was independently proved by S.~Oka
\cite[Theorem~4.3]{6} under the assumption that any stratifiable space is an S-space.

 Thus, for any stratifiable space $X$, we have $\dim X= \Ind X$.

\begin{corollary}
\label{cor3.8}
Let $X$ be a stratifiable space with a countable network. Then
$\dim X=\ind X=\Ind X$.
\end{corollary}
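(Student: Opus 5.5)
We must prove Corollary~\ref{cor3.8}: for a stratifiable space $X$ with a countable network, $\dim X=\ind X=\Ind X$. By Theorem~\ref{th3.7}, $\dim X=\Ind X$ for every stratifiable space, so it remains to insert $\ind X$ between them. One inequality is general: since $X$ is normal, $\ind X\le \Ind X$ (standard, \cite{1}). The task therefore reduces to proving $\Ind X\le \ind X$, or equivalently $\dim X\le\ind X$, under the extra hypothesis of a countable network.

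First, $X$ is Lindel\"of, because every space with a countable network is hereditarily Lindel\"of. Next, $X$ is regular and, as a stratifiable space, perfectly normal and paracompact; hence every subspace of $X$ is again stratifiable with a countable network, and in particular Lindel\"of and normal. I will then use the classical fact that for a regular Lindel\"of space, $\ind X\le 0$ implies $\dim X\le 0$, i.e.\ $\ind X=0\Rightarrow \Ind X=0$ (small inductive zero-dimensionality gives a base of clopen sets, and the Lindel\"of property lets one separate disjoint closed sets by a clopen set via countable refinements). I will then induct on $n=\ind X$, using condition~(c) of Theorem~\ref{th3.7}.

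For the inductive step, I pick a countable network $\{N_j\}$ for $X$. I use $\ind X\le n$ and the Lindel\"of property to choose a countable base-like family of open sets $U_{j,m}$ whose boundaries satisfy $\ind \operatorname{Fr}U_{j,m}\le n-1$ and which separate the pairs ``point $\in N_j$ versus closed complement of a neighborhood''. Concretely, for each pair $(N_j,N_l)$ with $\overline{N_j}$ contained in some open set that misses $N_l$, finitely or countably many such $U$'s suffice by the Lindel\"of property. Let $Z=\bigcup \operatorname{Fr}U_{j,m}$. Each $\operatorname{Fr}U$ is closed, hence an $F_\sigma$, and is stratifiable with a countable network and $\ind\le n-1$. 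By the induction hypothesis, $\dim\operatorname{Fr}U\le n-1$, and by the countable sum theorem for closed sets in the normal (perfectly normal) space $Z$, we get $\dim Z\le n-1$. The complement $X\setminus Z$ is a $G_\delta$-set (as $X$ is perfectly normal), and each $U_{j,m}$ is relatively clopen in it. Since these sets still form a network-type base, $\ind(X\setminus Z)\le 0$, so $\dim(X\setminus Z)\le 0$ by the Lindel\"of step. Decomposing $Z$ by Theorem~\ref{th3.7}(c) into $n$ $G_\delta$-pieces of $\dim\le0$ (they are $G_\delta$ in $X$ since $Z$ is $F_\sigma$ in a perfectly normal space; I may need to adjust this via perfect normality) and adding $X\setminus Z$, we obtain $X$ as a union of $n+1$ $G_\delta$ zero-dimensional sets. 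Then (c)$\Rightarrow$(a) of Theorem~\ref{th3.7} yields $\dim X\le n$.

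The main obstacle is the construction of the countable family $U_{j,m}$ so that $X\setminus Z$ is genuinely zero-dimensional; this requires exploiting the network rather than a base, since a countable network space need not be second countable. Instead of relying only on the network, I would first condense $X$ onto a separable metric space (available because $X$ is Tychonoff with a countable network). If this becomes delicate, an alternative is to cite the known result that $\ind=\dim$ for Lindel\"of spaces in which $\Ind=\dim$ holds hereditarily (cf.\ Arkhangel'skii's question in \cite{2}), using that $\dim$ is monotone on subspaces of the perfectly normal stratifiable space $X$.
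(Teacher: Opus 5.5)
Your reduction is right: by Theorem~\ref{th3.7} and $\ind X\le\Ind X$, everything comes down to $\dim X\le\ind X$. Your base case ($\ind X=0\Rightarrow\dim X=0$ for regular Lindel\"of spaces) is also correct. The inductive step, however, fails at the assertion $\ind(X\setminus Z)\le 0$. The traces $U_{j,m}\cap(X\setminus Z)$ are relatively clopen. But to conclude zero-dimensionality from them, they must form a base of $X\setminus Z$: for every $x\in X\setminus Z$ and every open $O\ni x$, some $U_{j,m}$ (or a finite intersection of them) must satisfy $x\in U_{j,m}\cap(X\setminus Z)\subseteq O$. Your choice does not give this. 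A set attached to the pair $(N_j,N_l)$ is only required to miss the single network element $N_l$, while $X\setminus O$ is in general a union of infinitely many network elements. A countable family that is a base at every point of $X\setminus Z$ would make $X\setminus Z$ second countable, and nothing in your construction forces that; a countable network is exactly weaker than a countable base. Condensing onto a separable metric space does not rescue the step, for three reasons. Families separating pairs of network elements only generate a coarser topology. Zero-dimensionality of a coarser topology does not pass to the finer one: the Erd\H{o}s space $\{x\in\ell^2: x_i\in\mathbb Q \text{ for all } i\}$ condenses onto a subspace of $\mathbb Q^{\omega}$ but has dimension~$1$. And $\ind$ of the image is not controlled by $\ind X$ at all. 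A second unverified step, which you flagged yourself, is that $G_\delta$-subsets of the $F_\sigma$-set $Z$ need not be $G_\delta$ in $X$ (compare $\mathbb Q\subseteq\mathbb R$). So Theorem~\ref{th3.7}(c) does not apply directly to your $n+1$ pieces.

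What is missing is the classical fact that the whole inequality $\dim X\le\ind X$, not just its case $\ind X=0$, holds for every regular Lindel\"of (finally compact) space. This is exactly what the paper uses. A countable network makes $X$ Lindel\"of, so \cite[Chap.~4, Sec.~8, Lemma~2]{1} gives $\dim X\le\ind X\le\Ind X$, and Theorem~\ref{th3.7} closes the chain. Your closing fallback points in this direction, but it rests on an unspecified ``known result'' and cites a question of Arkhangel'skii rather than a theorem. If you want a self-contained induction, apply the Lindel\"of property to pairs of disjoint closed sets instead of trying to make a complement zero-dimensional. For disjoint closed $A,B$, cover $X$ by countably many open sets $U_i$ with $\overline{U_i}\cap B=\varnothing$ and $V_i$ with $\overline{V_i}\cap A=\varnothing$, all of whose boundaries have $\ind\le n-1$. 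Then $U=\bigcup_i\bigl(U_i\setminus\bigcup_{j<i}\overline{V_j}\bigr)$ and $V=\bigcup_i\bigl(V_i\setminus\bigcup_{j\le i}\overline{U_j}\bigr)$ are disjoint open sets containing $A$ and $B$. Their complement $L$ lies in $\bigcup_i(\operatorname{Fr}U_i\cup\operatorname{Fr}V_i)$. Hence $\dim L\le n-1$ by the induction hypothesis and the countable sum theorem, and Vedenissoff's partition argument then yields $\dim X\le n$.
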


\begin{proof}
Any space with a countable network
is finally compact, and for finally compact spaces, the inequalities
$\dim X\leq \ind X\leq \Ind X$ hold \cite[Chap.~4, Sec.~8, Lemma~2]{1}.
\end{proof}

\begin{corollary}
\label{cor3.9}
Let $G$ be a stratifiable topological group. Then $\dim G =\Ind G$.
\end{corollary}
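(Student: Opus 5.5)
The plan is to derive the equality of dimensions in a stratifiable topological group directly from the results already proved for stratifiable spaces. The group structure should play essentially no role. A stratifiable topological group $G$ is, as a topological space, a stratifiable space. In the paper's standing convention it is also normal and $T_1$, and it is assumed to have finite $\Ind$. So Theorem~\ref{th3.7} applies to the underlying space of $G$, and it gives $\dim G\le n \iff \Ind G\le n$ for every $n$. Since this holds for every $n$, we get $\dim G=\Ind G$.

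In order, the steps are as follows. First, I would note that a topological group is Tychonoff. A stratifiable space is in any case perfectly normal and paracompact, so the normality and $T_1$ hypotheses used throughout the paper are satisfied. Second, I would invoke Theorem~\ref{th3.6}: $G$ is an S-space. Then, by Corollary~1 of \cite{15}, or equivalently the equivalence (a)$\Leftrightarrow$(b) of Theorem~\ref{th3.7}, we have $\dim G\le n$ if and only if $\Ind G\le n$. Third, I would conclude the equality, including the case where both dimensions are infinite if one drops the finiteness convention. If $\Ind G$ is infinite, then $\dim G\le n$ fails for every $n$, because otherwise (a)$\Rightarrow$(b) would give $\Ind G\le n$.

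The only possible obstacle is the paper's blanket assumption that spaces have finite dimension $\Ind$. If the corollary is meant without that assumption, I would rely on the fact that the equivalence in Theorem~\ref{th3.7} is stated for each fixed $n$. That argument handles the infinite case symmetrically, so no further work beyond quoting Theorem~\ref{th3.7} is needed. The corollary is really just the special case of a space that happens to carry a group structure.
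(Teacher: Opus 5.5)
Your proof is correct and follows the paper's route exactly. Corollary~\ref{cor3.9} is just Theorem~\ref{th3.7} (i.e., Theorem~\ref{th3.6} plus Corollary~1 of \cite{15}) applied to the underlying stratifiable space of $G$, with the group structure playing no role; one caution is that your aside on infinite dimension is not supported by the paper, because Theorem~\ref{th3.7} and the machinery behind it (the notion of a pair that determines the dimension assumes $\Ind Z=n$ is finite) are proved under the standing finite-$\Ind$ convention, so you cannot quote (a)$\Rightarrow$(b) for a space with $\Ind G=\infty$ — although this does not affect the corollary as stated, which is made under that same convention.
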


\begin{corollary}
\label{cor3.10}
Let $X$ be an $M_1$-space. Then $\dim X=\Ind X$.
\end{corollary}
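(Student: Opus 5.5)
The plan is to reduce Corollary~\ref{cor3.10} to Theorem~\ref{th3.7}. I would show that every $M_1$-space is stratifiable and then read off $\dim X=\Ind X$ from the equivalence of (a) and (b) there.

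\textbf{Step 1: an $M_1$-space is stratifiable.} Let $X$ be an $M_1$-space with a $\sigma$-closure-preserving base $\mathscr V=\bigcup_n\mathscr V_n$, where each $\mathscr V_n$ consists of open sets and is closure-preserving. Every base is in particular a quasi-base: for $x\in X$ and an open $Ox\ni x$ there is $V\in\mathscr V$ with $x\in V=\Int(V)\subseteq V\subseteq Ox$. So $\mathscr V$ is a $\sigma$-closure-preserving quasi-base. By the convention adopted in Section~\ref{sec2} (the Gruenhage--Junnila characterization), $X$ is therefore stratifiable; this is the inclusion $M_1\subseteq M_2\subseteq M_3$ recalled in the Introduction. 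Stratifiable spaces are perfectly normal and paracompact, so $X$ satisfies the standing assumptions of the paper. The one caveat is that the paper mostly works with closed quasi-bases. If a closed one is wanted, I would pass to $\{\overline V: V\in\mathscr V\}$, which is still closure-preserving in each level. It is still a quasi-base by regularity: choose $V$ with $x\in V\subseteq\overline V\subseteq Ox$, and then $x\in\Int(\overline V)$.

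\textbf{Step 2: apply the main theorem.} By Theorem~\ref{th3.6}, $X$ is an S-space. By Theorem~\ref{th3.7}, for every $n$ we have $\dim X\le n$ if and only if $\Ind X\le n$. Taking the least such $n$ on either side gives $\dim X=\Ind X$. This also covers the infinite case, since then neither inequality holds for any finite $n$.

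There is no real obstacle here; all the substance is in Theorems~\ref{th3.1}--\ref{th3.7}. The only point needing care is to check that the notion of quasi-base, and the regularity needed to close it up, match the definitions used in the paper, and Step~1 handles this.
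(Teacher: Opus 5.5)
Your proof is correct and follows the paper's own route. The paper also gets Corollary~\ref{cor3.10} by observing, as in the Introduction, that every $M_1$-space is an $M_2$-space and hence stratifiable, and then applying the coincidence $\dim X=\Ind X$ for stratifiable spaces from Theorems~\ref{th3.6} and~\ref{th3.7}. Your extra step of passing to closures of the base is harmless but unnecessary, since the paper already takes a closed $\sigma$-closure-preserving quasi-base for any stratifiable space from Gruenhage--Junnila.
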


Lemma~2.13 of \cite{14} and the coincidence of the dimensions $\dim$  and $\Ind$ for stratifiable spaces
imply the following statement.

\begin{corollary}
\label{cor3.11}
 Let $X$ and $Y$ be stratifiable spaces. Then $\Ind(X\times Y)\leq \Ind X+\Ind Y$.
\end{corollary}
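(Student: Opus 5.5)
The statement is Corollary~\ref{cor3.11}: $\Ind(X\times Y)\leq \Ind X+\Ind Y$ for stratifiable $X$ and $Y$. My plan is to move the whole problem to the covering dimension, where the product inequality is available, and then move back using the coincidence $\dim=\Ind$ proved above.

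\textbf{Step 1: the product is stratifiable.} Countable (in particular finite) products of stratifiable spaces are stratifiable, as recalled in the introduction. So $X\times Y$ is stratifiable, hence perfectly normal and paracompact.

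\textbf{Step 2: identify the dimensions.} By Theorem~\ref{th3.7}, or equivalently the consequence $\dim Z=\Ind Z$ of Theorem~\ref{th3.6}, applied to $Z=X$, $Z=Y$ and $Z=X\times Y$, we have
\[
\Ind(X\times Y)=\dim(X\times Y),\qquad \Ind X=\dim X,\qquad \Ind Y=\dim Y.
\]
The paper's standing convention assumes spaces have finite $\Ind$. If $\Ind(X\times Y)$ is not known a priori to be finite, I would apply the equivalence (a)$\Leftrightarrow$(b) of Theorem~\ref{th3.7} with $n=\dim X+\dim Y$. This avoids any finiteness assumption on the product.

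\textbf{Step 3: the product inequality for $\dim$.} It remains to show
\[
\dim(X\times Y)\leq \dim X+\dim Y.
\]
This is the content of Lemma~2.13 of \cite{14}, which the paper cites here. If I had to supply it, I would use characterization (d) of Theorem~\ref{th3.7}. Represent $X$ as a perfect $(m+1)$-to-$1$ image of a zero-dimensional stratifiable space $X'$, and $Y$ as a perfect $(k+1)$-to-$1$ image of a zero-dimensional stratifiable space $Y'$.

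The product map $X'\times Y'\to X\times Y$ is then perfect and at most $(m+1)(k+1)$-to-$1$. However, the point counts multiply, so (d) does not directly give $\dim\leq m+k$. Instead I would use characterization (c). Write
\[
X=\bigcup_{i=1}^{m+1} X_i,\qquad Y=\bigcup_{j=1}^{k+1} Y_j,
\]
with each $X_i$, $Y_j$ a zero-dimensional $G_\delta$-set. Group the products by "diagonals":
\[
Z_l=\bigcup_{i+j=l} X_i\times Y_j,\qquad l=2,\dots,m+k+2 .
\]
This gives $m+k+1$ sets covering $X\times Y$.

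\textbf{Main obstacle.} The hard part is showing that each $Z_l$ is a zero-dimensional $G_\delta$-set. It is not clear that a union of pieces $X_i\times Y_j$ is zero-dimensional. I would fix this in the standard way: first refine the decompositions so that $X_1\cup\dots\cup X_i$ is closed-like, and then use the countable sum theorem for $\dim 0$ in stratifiable (hereditarily paracompact, perfectly normal) spaces. The ingredients are:
\begin{itemize}
\item each $X_i\times Y_j$ is a product of zero-dimensional stratifiable spaces, so it has $\dim 0$;
\item the diagonal pieces with the same $l$ are relatively separated by $F_\sigma$-sets in $Z_l$.
\end{itemize}
Because this bookkeeping is delicate, in practice I would simply rely on \cite[Lemma~2.13]{14} for the $\dim$ product inequality. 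With Steps~1 and~2 the corollary then follows.
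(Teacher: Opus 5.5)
This is the paper's argument. The paper simply combines Lemma~2.13 of \cite{14}, which supplies the product inequality you invoke in Step~3, with the coincidence $\dim=\Ind$ for the stratifiable spaces $X$, $Y$, $X\times Y$, so your reliance on that citation is correct.

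Do drop the self-contained sketch via condition~(c), though, because the diagonal unions $Z_l$ need not be zero-dimensional. For example, take $X=Y=[0,1]$, $X_1=Y_1$ the irrationals, and $X_2=Y_2=[0,1]\setminus D$ with $D$ a countable dense set disjoint from $\mathbb{Q}$; then $Z_3$ contains every segment $[0,1]\times\{c\}$ with $c\notin\mathbb{Q}\cup D$.
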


   The following questions remain open.

\begin{question}[Arkhangel'skii \cite{2}]
Let $X$ be a space with a countable network being a topological group. Is it true that
$\dim X=\Ind X=\ind X$?
\end{question}

\begin{question}
Let $X$ be a normal Moore  space (a developable space). Is it true that
$\Ind X=\dim X$?
\end{question}

\begin{question}[Arkhangel'skii]
Let $X$ be a perfectly normal space for which all $X^n$, $n=1,2,\dots$, are perfectly normal as well.
Is it true that $\Ind X=\dim X$? Note that $X$ is not necessarily compact, i.e., the case of a noncompact
$X$ is also of interest.
\end{question}

\begin{question}
Does there exist in ZFC a stratifiable space $X$ for which $\ind X=0$ and $\Ind X\geq 2$?
\end{question}

\end{document}